\DeclareFontShape{T1}{lmr}{bx}{sc} { <-> ssub * cmr/bx/sc }{}
\theoremstyle:=definition,remark,plain\do{%
        \expandafter\g@addto@macro\csname th@\theoremstyle\endcsname{%
            \addtolength\thm@preskip\parskip
            }%
        }
\pgfplotsset{compat=newest}
\let\oldbibliography\thebibliography
\renewcommand{\thebibliography}[1]{%
  \small
  \oldbibliography{#1}%
  \setlength{\itemsep}{0pt}%
}
\numberwithin{equation}{section}
\numberwithin{table}{section}
\numberwithin{figure}{section}
\newtheorem{theorem}{Theorem}[section]
\newtheorem{problem}[theorem]{Problem}
\newtheorem{assumption}[theorem]{Assumption}
\crefname{lemma}{lemma}{lemmata}
\Crefname{lemma}{Lemma}{Lemmata}
\crefname{corollary}{corollary}{corollaries}
\Crefname{corollary}{Corollary}{Corollaries}
\theoremstyle{definition}
\newtheorem{remark}[theorem]{Remark}
\theoremstyle{definition}
\newtheorem{example}[theorem]{Example}
\DeclarePairedDelimiter{\ceil}{\lceil}{\rceil}
\DeclarePairedDelimiter{\floor}{\lfloor}{\rfloor}
\newcommand{\bodePlotWidth}{5in}
\newcommand{\bodePlotHeight}{2in}
\newcommand{\original}{original}
\newcommand{\structRealization}{struct. ROM}
\newcommand{\structRealizationParam}{struct. ROM with $\delay^\star$}
\newcommand{\interpDataText}{interpolation data}
\newcommand{\interpDataParam}{test data}
\newcommand{\absError}{abs. error}
\newcommand{\absErrorParam}{abs. error with $\delay^\star$}
\newcommand{\mat}[3]{#1^{#2\times #3}}
\newcommand{\pseudo}[1]{#1^{\dagger}}
\newcommand{\bs}[1]{\boldsymbol{#1}}
\newcommand{\state}{\bs{x}}
\newcommand{\stateDim}{n}
\newcommand{\dimFOM}{N}
\newcommand{\inpVar}{\bs{u}}
\newcommand{\inpVarDim}{m}
\newcommand{\outVar}{\bs{y}}
\newcommand{\outVarDim}{\ell}
\newcommand{\param}{\bs{p}}
\newcommand{\paramSet}{\mathbb{P}}
\newcommand{\paramSetDim}{p}
\newcommand{\delay}{\tau}
\newcommand{\system}{\Sigma}
\newcommand{\A}{A}
\newcommand{\B}{B}
\newcommand{\C}{C}
\newcommand{\E}{E}
\newcommand{\Aa}{A_1}
\newcommand{\Ab}{A_2}
\newcommand{\red}[1]{\widetilde{#1}}
\newcommand{\outVarRed}{\red{\outVar}}
\newcommand{\Ared}{\red{\A}}
\newcommand{\Bred}{\red{\B}}
\newcommand{\Cred}{\red{\C}}
\newcommand{\hfunc}{\mathfrak{h}}
\newcommand{\transfer}{H}
\newcommand{\transferRed}{\red{\transfer}}
\newcommand{\frequency}{s}
\def\numData{n}
\def\leftPoint{\mu}
\def\leftData{f}
\def\LeftData{\mathcal{F}}
\def\rightPoint{\sigma}
\def\rightData{g}
\def\RightData{\mathcal{G}}
\newcommand{\pole}{\lambda}
\newcommand{\interpData}{\vartheta}
\newcommand{\numTestData}{q}
\newcommand{\lsError}{\mathcal{E}}
\def\numFunctions{K}
\def\Hred{\smash{\widetilde{H}}}
\def\Bred{\smash{\widetilde{\B}}}
\def\Cred{\smash{\widetilde{\C}}}
\def\Kred{\smash{\widetilde{\mathcal{K}}}}
\newcommand{\timeStep}{\delta_t}
\newcommand{\numTimeSteps}{N}
\newcommand{\ZinpVar}{\widehat{\inpVar}}
\newcommand{\ZoutVar}{\widehat{\outVar}}
\newcommand{\outVarMat}{\bs{Y}}
\newcommand{\tf}{t_\mathrm{f}}
\newcommand{\solOperator}{\mathcal{S}}
\newcommand{\romOperator}{\red{\solOperator}}
\newcommand{\tol}{\varepsilon}
\newcommand{\Linfty}{\mathcal{L}_\infty}
\newcommand{\Ltwo}{\mathcal{L}_2}
\newcommand{\Htwo}{\mathcal{H}_2}
\newcommand{\LinftyNorm}[1]{\left\|#1\right\|_{\Linfty}}
\newcommand{\LtwoNorm}[1]{\left\|#1\right\|_{\Ltwo}}
\newcommand{\HtwoNorm}[1]{\left\|#1\right\|_{\Htwo}}
\newcommand{\indSet}{\mathcal{I}}
\newcommand{\markovParam}{h}
\newcommand{\fourierPoint}[1]{q_{#1}}
\newcommand{\sphere}{\mathbb{S}}
\newcommand{\transferEstimate}{\hat{\transfer}}
\newcommand{\FourierMatrix}{\bs{F}}
\newcommand{\jmin}{j_{\mathrm{min}}}
\newcommand{\fmin}{f_{\mathrm{min}}}
\newcommand{\fmax}{f_{\mathrm{max}}}
\DeclareMathOperator*{\argmin}{arg\,min}
\newacronym{MOR}{MOR}{model order reduction}
\newacronym{ROM}{ROM}{reduced-order model}
\newacronym{FOM}{FOM}{full order model}
\newacronym{ODE}{ODE}{ordinary differential equation}
\newacronym{DDE}{DDE}{delay differential equation}
\newacronym{DDAE}{DDAE}{delay differential-algebraic equation}
\newacronym{SDE}{SDE}{stochastic differential equation}
\newacronym{SISO}{SISO}{single-input/single-output}
\newacronym{SDDE}{SDDE}{stochastic delay differential equation}
\newacronym{ETFE}{ETFE}{empirical transfer function estimate}
\newacronym{DFT}{DFT}{discrete Fourier transform}
\newacronym{FFT}{FFT}{fast Fourier transform}
\newacronym{LTI}{LTI}{linear time-invariant}
\newacronym{DMD}{DMD}{dynamic mode decomposition}
\newacronym{PDE}{PDE}{partial differential equation}
\newacronym{SVD}{SVD}{singular value decomposition}
\newacronym{BIBO}{BIBO}{bounded-input/bounded-output}
\newacronym{lsETFE}{lsTFE}{least-squares transfer function estimate}
\title{From Time-Domain Data to Low-Dimensional Structured Models}
\author{Elliot Fosong\footnotemark[1] \and Philipp Schulze\footnotemark[2]~\footnotemark[3] \and Benjamin Unger\footnotemark[2]~\footnotemark[4]}
\begin{document}

\maketitle
\renewcommand{\thefootnote}{\fnsymbol{footnote}}
\footnotetext[1]{Pembroke College,
University of Cambridge, 
Cambridge, CB2 1RF, UK, \texttt{einf2@cam.ac.uk}.
The work of this author was supported by the German Academic Exchange Service within the RISE Germany program.}
\footnotetext[2]{Institut f\"ur Mathematik,
Technische Universität Berlin, Str.\ des 17.~Juni~136,
10623~Berlin,
Germany,
\texttt{\{pschulze,unger\}@math.tu-berlin.de}. }
\footnotetext[3]{
The work of this author is supported by the DFG Collaborative Research Center 1029 \emph{Substantial efficiency increase in gas turbines through direct use of coupled unsteady combustion and flow dynamics}, project A02.}
\footnotetext[4]{
The work of this author is supported by the DFG Collaborative Research Center 910 \emph{Control of self-organizing nonlinear systems: Theoretical methods and concepts of application}, project A2.}
%
%%%%%%%%%%%%%%%%%%%%%%%%%%%%%%%%%%%%%%%%%%%%%%%%%%%%%%%%%%%%%%%%%%%%%%%%

\begin{abstract}
	We present a framework for constructing a structured realization of a linear time-invariant dynamical system solely from a discrete sampling of an input and output trajectory of the system. We estimate the transfer function of the original model at selected frequencies using a modification of the empirical transfer function estimation that was recently presented in [Peherstorfer, Gugercin, Willcox, SIAM J.~Sci.~Comput., 39(5):2152--2178, 2017]. Our realization interpolates the transfer function estimates and can be seen as a generalization of the Loewner framework to structured systems. We demonstrate the presented framework by means of a delay example.
\end{abstract}
\noindent
{\bf Keywords:} structured realization; nonintrusive model reduction; structure-preserving model reduction; delay differential equations; transfer function estimate
\vskip .3truecm
\noindent
{\bf AMS(MOS) subject classification:} 30E10, 37M99, 65P99, 93C05

\section{Introduction}
\label{sec:introduction}
Nowadays it is common to describe a physical or chemical system by a mathematical surrogate model. The demand for high fidelity models results in large-scale dynamical systems, for which classical numerical methods may be too time or memory consuming. In these cases, \gls{MOR} aims in reducing the computational cost by approximating the dynamical system with a model of a smaller dimension, the so-called \gls{ROM}. For an overview of existing methods we refer to the recent books and surveys \cite{BenCOW17,BauBF14,QuaMN16,Ant05,benner2015survey,hesthaven2016certified,chiHRW16}. Most \gls{MOR} schemes are intrusive in the sense that they are formulated in a projection framework and thus require access to an internal state-space representation of the complex physical system. 

In practical applications the model might be available implicitly via a simulation code and even depend on look-up tables. Thus, the model itself can be considered as a black box providing simulation results or measurements for given inputs (cf.\ \Cref{fig:systemPicture}) without the possibility to access a state-space realization. In this case the surrogate model needs to be built from input/output measurements only. Popular methods that are tailored to such a data-driven setting are the Loewner framework \cite{MayA07}, vector fitting \cite{DrmGB15a,GusS99}, or \gls{DMD} \cite{Sch10,TuRLBK14,KutBBP16}. 

\begin{figure}[ht]
	\centering
	\begin{tikzpicture}
		\node (rect) at (1,0) [draw,very thick,minimum width=2cm,minimum height=1cm] {$\system$};
		\draw[very thick,->] (-1.5,0) -- node[above] {$\inpVar$} (0,0);
		\draw[very thick,->] (2,0) -- node[above] {$\outVar$} (3.5,0);
	\end{tikzpicture}
	\caption{The system $\system$ is considered a black-box.}
	\label{fig:systemPicture}
\end{figure}
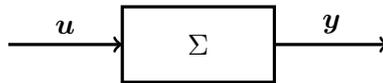

Despite the inaccessibility of a state-space description, there may yet be a good understanding of the behavior of the system. Possible examples are vibration effects that are naturally associated with second-order systems and advection effects, which are associated with state delays, cf{.} \cite{SchUBG18}. 
Further examples of relevant system structures are listed in \Cref{tab:StructureExamples}.

\begin{table}
	\centering
	\caption{Examples of system structures and their transfer functions \cite[Table~1]{SchUBG18}}
	\label{tab:StructureExamples}
	{\footnotesize
	\begin{tabular}{lll}
		\toprule
		& \textbf{state space description} & \textbf{transfer function}\\\midrule
		second-order & $A_1\ddot{\state}(t) + A_2\dot{\state}(t) + A_3\state(t) = B\inpVar(t)$ & $C\left(s^2A_1 + sA_2 + A_3\right)^{-1}B$\\
		state delay & $A_1\dot{\state}(t) + A_2\state(t) + A_3\state(t-\delay) = B\inpVar(t)$ & $C\left(sA_1 + A_2 + \mathrm{e}^{-\delay s}A_3\right)^{-1}B$\\
		neutral delay & $A_1\dot{\state}(t) + A_2\state + A_3\dot{\state}(t-\delay) = B\inpVar(t)$ & $C\left(sA_1 + A_2 + s\mathrm{e}^{-\tau s}A_3\right)^{-1}B$\\
		viscoelastic & $A_1\ddot{\state}(t) + \int_0^t h(t\!-\!\tau)A_2\dot{\state}(\tau)\mathrm{d}\tau + A_3\state(t) = B\inpVar(t)$ & $C\left(s^2A_1 + s\hat{h}(s)A_2 + A_3\right)^{-1}\!B$\\\bottomrule
	\end{tabular}
	}
\end{table}

In these cases it is desirable to reflect such structural properties within the realization, since structure preservation may result in \glspl{ROM} of smaller dimension than what unstructured methods produce while maintaining a comparable or even better accuracy, see \cite[Section 5]{BeaG09}. Although there is a significant body of literature dealing with structure-preserving \gls{MOR} methods \cite{SuC91,MeyS96,BeaG09,ChaBG16,Fre08,LalKM03,ChaGVV05}, almost all of the approaches require an internal description. Notable exceptions are provided in \cite{ScaA14,PonPS15,SchU16,SchUBG18}.
 
 \begin{remark}
 	For some model problems, for instance a circuit that involves a lossless transmission line \cite{Bra67}, it is possible to transform a hyperbolic \gls{PDE} into a delay equation \cite{Lop76,CooK68} that is -- from a computational perspective -- much easier to solve. Thus even if a state-space description is available, it may be advantageous to choose a different structure for the surrogate model than for the original model. Notice that many of these problems are characterized by slowly decaying Hankel singular values or Kolmogorov $n$-widths \cite{UngG18}, which prevents classical \gls{MOR} methods from succeeding and thus requires a special treatment \cite{OhlR16,ReiSSM18,CagMS19}. 
 \end{remark}

Throughout this paper we make the assumption that the system $\system$ in \Cref{fig:systemPicture} is linear, i.e., there exists a \gls{LTI} operator $\solOperator$ with $\outVar = \solOperator\inpVar$. For simplicity, we additionally assume a \gls{SISO} system, that is $\inpVar(t), \outVar(t)\in\mathbb{R}$ for all $t$. According to our discussion above, we are interested in solving the following problem.

\begin{problem}
	\label{problem:generalSetting}
	Construct a structured \gls{LTI} operator $\romOperator$ solely from input/output data such that
	\begin{equation*}
		\|\outVar - \outVarRed\| = \|\solOperator\inpVar - \romOperator\inpVar\| \leq \tol \|\inpVar\|
	\end{equation*}
	for all admissible input signals $\inpVar$, a small parameter $\tol\geq 0$, and suitable norms.
\end{problem}

In order to solve \Cref{problem:generalSetting} we have to address the question what kind of data we assume available and define precisely, what a structured \gls{LTI} operator is. Recall that \gls{LTI} systems can be represented in the time domain or in the frequency domain \cite{Ant05}. The mapping from the time to the frequency domain is given by the Laplace transform for continuous-time systems and the Z-transform for discrete-time systems. Moreover, the $\Linfty$ error in the time domain can be bounded by the $\Htwo$ error in the frequency domain via
\begin{equation}
	\label{eq:timeFrequencyErrorBound}
	\LinftyNorm{\outVar-\outVarRed} \vcentcolon= \sup_{t>0} \|\outVar(t)-\outVarRed(t)\|_\infty \leq \HtwoNorm{\solOperator - \smash{\romOperator}}\LtwoNorm{\inpVar}.
\end{equation}
In fact, for \gls{SISO} systems, the $\Htwo$ norm is the $\Ltwo$-$\Linfty$ induced norm of the underlying convolution operator, i.e. $\HtwoNorm{\solOperator - \smash{\romOperator}}$ is the smallest number such that \eqref{eq:timeFrequencyErrorBound} holds for all inputs $\inpVar\in\Ltwo$ \cite{BeaG17}.

It is well-known (cf.\ \cite{BeaG17} and the references therein) that if the solution operator $\solOperator$ is the convolution operator of a standard state-space realization, that is (assuming a zero initial condition and no direct feed through),
\begin{displaymath}
	(\solOperator\inpVar)(t) = \int_0^t \C\exp(\A(t-s))\B\inpVar(s)\mathrm{d}s,
\end{displaymath}
the $\Htwo$ error $\smash{\HtwoNorm{\solOperator - \smash{\romOperator}}}$ is minimized if the transfer function of $\romOperator$ interpolates the transfer function of $\solOperator$ at the mirror images of the poles of $\romOperator$. Thus our approach to solve \Cref{problem:generalSetting} is to construct $\romOperator$ such that it is an interpolant of $\solOperator$ in the frequency domain. Since we only assume access to input/output measurements in the time domain, the transfer function of $\solOperator$ is not available and hence needs to be estimated (see the forthcoming \Cref{sec:lsTFE}).

\begin{remark}
	For structured systems, interpolation in the frequency domain was investigated in \cite{BeaG09} within an projection framework and in \cite{SchUBG18} in a data-driven framework. Although it is possible to construct $\Htwo$-optimal interpolants solely from data \cite{BeaG12}, the optimality conditions for structured problems are much more involved \cite{BeaB14,PonGBPS16} and to our knowledge, there exists no general computational strategy to obtain optimal interpolation points.
\end{remark}

The remainder of the paper is structured as follows: In \Cref{sec:lsTFE} we review the transfer function estimation algorithm presented in \cite{PehGW17} and adapt the theoretical analysis to our setting (cf. \Cref{thm:convergenceETFE}). The connection between the continuous-time and the discrete-time setting is given in \Cref{sec:implementationDetailslsETFE}. To make the paper self-contained, we review the structured interpolation framework from \cite{SchUBG18} in \Cref{sec:structuredRealization} and present a simple algorithm for estimating parameters which are associated to the specific structure -- for instance, a time delay -- in \Cref{sec:parameterEstimation}. We illustrate the theoretical findings with a numerical case study for a delay example in \Cref{sec:caseStudy}.

\section{Least-Squares Transfer Function Estimate}
\label{sec:lsTFE}
Several methods, such as for instance the so-called \emph{signal generator approach} \cite{Ast10}, are designed to use time-domain data to obtain frequency measurements. In this work, we use a modification of the \gls{ETFE} method \cite{Lju85} that is presented in \cite{PehGW17} and does not assume periodicity of the input and output sequence. Since the main tool of the method from \cite{PehGW17} is the solution of a least-squares problem (see \eqref{eq:leastSquaresETFE}) we refer to this approach as \gls{lsETFE}.

For a given time step size $\timeStep>0$ consider the time grid $0 = t_0 < t_1 < \ldots < t_\numTimeSteps = \tf$ with $t_j = j\timeStep$ for $j=0,1,\ldots,\numTimeSteps$ and $\numTimeSteps\in\mathbb{N}$. Moreover, we assume that measurements of the input and the output at the time grid are available, i.e., that we have access to the data
\begin{equation}
	\label{eq:timeDomainData}
	\inpVar_j \vcentcolon= \inpVar(t_j)\in\mathbb{R}^{\inpVarDim},\qquad\text{and}\qquad
	\outVar_j \vcentcolon= \outVar(t_j)\in\mathbb{R}^{\outVarDim}\qquad\text{for}\ j=0,1,\ldots,\numTimeSteps.
\end{equation}
For simplicity, we assume in the following, that the data under consideration is generated from a \gls{SISO} dynamical system, that is, $\inpVarDim = \outVarDim = 1$, with zero initial condition. Moreover, we assume that the system is \gls{BIBO} stable, i.e., the sequence $(\outVar_j)_{j\in\mathbb{N}}$ is bounded for any bounded sequence $(\inpVar_j)_{j\in\mathbb{N}}$, and make the following crucial assumption for the remainder of this chapter.

\begin{assumption}
	\label{ass:LTIsystem}
	The data in \eqref{eq:timeDomainData} is generated from a causal, \gls{BIBO} stable \gls{LTI} system.
\end{assumption}

For the case $\numTimeSteps = \infty$, \Cref{ass:LTIsystem} guarantees that the output data $\outVar_j$ is obtained via the convolution of the impulse response of the system and the inputs $\inpVar_j$. More precisely, there exist numbers $\markovParam_i\in\mathbb{R}$ such that
\begin{equation*}
	\label{eq:convolution}
	\outVar_j = \sum_{i=0}^j \markovParam_i \inpVar_{j-i}\qquad\text{for}\ j\in\mathbb{N}.
\end{equation*}
\begin{example}
	\label{ex:discreteStandardSystem}
	If the data \eqref{eq:timeDomainData} is generated from the discrete-time system
	\begin{align*}
		\E\state_{j+1} &= \A\state_j + \B\inpVar_j,\\
		\outVar_j &= \C\state_j, \\
		\state_0 &= 0,
	\end{align*}
	with nonsingular matrix $\E\in\mat{\mathbb{R}}{\stateDim}{\stateDim}$, then $\markovParam_i = \C\left(\E^{-1}\A\right)^{i-1}\left(\E^{-1}\B\right)$ for $i>0$ and $\markovParam_0 = 0$.
\end{example}

Taking the Z-transforms of $\left(\inpVar_j\right)_{j\in\mathbb{N}}$ and $\left(\outVar_j\right)_{j\in\mathbb{N}}$ 
\begin{equation*}
	\label{eq:ztransforms}
	\ZinpVar(z) = \sum_{i=0}^\infty \inpVar_i z^{-i}\qquad \text{and}\qquad
	\ZoutVar(z) = \sum_{i=0}^\infty \outVar_i z^{-i}
\end{equation*}
implies $\ZoutVar(z) = \transfer(z)\ZinpVar(z)$, where $\transfer$ is given by the formal power series
\begin{displaymath}
	\transfer(z) = \sum_{i=0}^\infty \markovParam_i z^{-i}.
\end{displaymath}

In practical applications we have $\numTimeSteps < \infty$ and thus cannot apply the Z-transform. Instead, we use the \gls{FFT}, which can be interpreted as a special case of the Z-transform. More precisely, we define $\fourierPoint{k} \vcentcolon= \exp(\tfrac{2\pi\imath}{\numTimeSteps}k)$,
\begin{equation*}
	\label{eq:ZtransformedData}
	\ZinpVar_{k;\numTimeSteps} \vcentcolon= \sum_{j=0}^{\numTimeSteps-1} \inpVar_j\fourierPoint{k}^{-j}, \qquad\text{and}\qquad
	\ZoutVar_{k;\numTimeSteps} \vcentcolon= \sum_{j=0}^{\numTimeSteps-1} \outVar_j\fourierPoint{k}^{-j}
\end{equation*}
for $k=0,\ldots,\numTimeSteps-1$. Using the index set
\begin{displaymath}
	\indSet \vcentcolon= \left\{k\in\{0,1,\ldots,\numTimeSteps\}\,\bigg|\, \abs{\ZinpVar_{k;\numTimeSteps}}>0\right\} =\vcentcolon \{k_1,\ldots,k_r\},
\end{displaymath}
we can define
\begin{displaymath}
	\transfer_{k;\numTimeSteps} \vcentcolon= \frac{\ZoutVar_{k;\numTimeSteps}}{\ZinpVar_{k;\numTimeSteps}}\qquad\text{for}\ k\in\indSet
\end{displaymath}
as an approximation of the transfer function. This particular way of estimating the transfer function is known as the \gls{ETFE} \cite{Lju85}. If the sequence $\inpVar_j$ and $\outVar_j$ are periodic with period $\numTimeSteps$, then $\transfer_{k;\numTimeSteps} = \transfer(\fourierPoint{k})$. 
In practical applications, periodicity cannot always be assumed and thus, we pursue a different way here. Following \cite{PehGW17} we define the partial sum
\begin{equation*}
	\label{eq:partialTransfer}
	\transfer_j(z) \vcentcolon= \sum_{i=0}^j \markovParam_i z^{-i}.
\end{equation*}
Using the inverse \gls{FFT},
we observe
\begin{equation}
	\label{eq:timeFrequencyRelation}
	\begin{aligned}
	\outVar_j &= \sum_{i=0}^j \markovParam_i \inpVar_{j-i} = \sum_{i=0}^j \markovParam_i \left(\frac{1}{\numTimeSteps}\sum_{k=0}^{\numTimeSteps-1} \ZinpVar_k \fourierPoint{k}^{j-i}\right) = \frac{1}{\numTimeSteps}\sum_{k=0}^{\numTimeSteps-1} \ZinpVar_k \transfer_j(\fourierPoint{k})\fourierPoint{k}^j\\
	&= \frac{1}{\numTimeSteps} \sum_{k\in\indSet} \ZinpVar_k\transfer_j(\fourierPoint{k})\fourierPoint{k}^j.
	\end{aligned}
\end{equation}
Note that \eqref{eq:timeFrequencyRelation} provides a direct link between the time domain data $\outVar_j$ and the frequency data $\transfer_j(\fourierPoint{k})$. In addition, we have the following convergence result, which is a generalization of \cite[Proposition~3.2]{PehGW17}.

\begin{theorem}
	\label{thm:convergenceETFE}
	Suppose the data \eqref{eq:timeDomainData} is generated from a dynamical system that satisfies \Cref{ass:LTIsystem}. Then
	\begin{displaymath}
		\lim_{j\to\infty} \transfer_j(z) = \transfer(z)\qquad\text{for all}\ z\in\sphere \vcentcolon= \{z\in\mathbb{C} \mid \abs{z} = 1\}.
	\end{displaymath}
\end{theorem}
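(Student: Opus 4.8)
The plan is to reduce the assertion to a single analytic fact about the impulse response, namely that \Cref{ass:LTIsystem} forces the Markov parameters to be absolutely summable, $\sum_{i=0}^\infty \abs{\markovParam_i} < \infty$, and then to control the tail of the series on the unit circle. Once summability is available the convergence is essentially automatic, so the entire difficulty is concentrated in extracting $\ell^1$-summability from \gls{BIBO} stability.

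First I would prove that \gls{BIBO} stability implies $\sum_{i=0}^\infty \abs{\markovParam_i} < \infty$. For each fixed index $j$, the convolution $\outVar_j = \sum_{i=0}^j \markovParam_i \inpVar_{j-i}$ shows that the map $L_j\colon (\inpVar_k)_k \mapsto \outVar_j$ is a bounded linear functional on the space $\ell^\infty$ of bounded input sequences, with operator norm exactly $\sum_{i=0}^j \abs{\markovParam_i}$; the supremum is attained by the sign input $\inpVar_{j-i} = \operatorname{sign}(\markovParam_i)$ for $i=0,\dots,j$, which lies in the unit ball of $\ell^\infty$. \Cref{ass:LTIsystem} guarantees that for every fixed bounded input the output sequence is bounded, i.e.\ $\sup_j \abs{L_j(\inpVar)} < \infty$ for each $\inpVar\in\ell^\infty$. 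By the uniform boundedness principle the operator norms are then uniformly bounded, $\sup_j \sum_{i=0}^j \abs{\markovParam_i} < \infty$, which is precisely absolute summability.

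Second, with $\sum_{i=0}^\infty \abs{\markovParam_i} < \infty$ secured, I would estimate the remainder directly. For any $z\in\sphere$ we have $\abs{z^{-i}} = 1$, hence
\begin{displaymath}
  \abs{\transfer(z) - \transfer_j(z)} = \left| \sum_{i=j+1}^\infty \markovParam_i z^{-i} \right| \le \sum_{i=j+1}^\infty \abs{\markovParam_i},
\end{displaymath}
and the right-hand side is the tail of a convergent series, independent of $z$, so it vanishes as $j\to\infty$. This proves the claimed pointwise limit; in fact it delivers uniform convergence on $\sphere$ at no extra cost.

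The step I expect to be the main obstacle is the first one. The naive contrapositive argument -- choosing for each $N$ a sign input that makes $\outVar_N = \sum_{i=0}^N \abs{\markovParam_i}$ large -- only bounds each individual functional $L_N$ and, under the pointwise formulation of \gls{BIBO} stability adopted here, does not by itself exhibit a single bounded input with unbounded output. Bridging this gap is exactly what the uniform boundedness principle (or, alternatively, a careful block-diagonal construction of one bad input) is needed for. After that, the tail estimate together with the observation $\abs{z^{-i}}=1$ on $\sphere$ is routine, and the exponential decay available in the special case of \Cref{ex:discreteStandardSystem} would give summability directly without any functional-analytic machinery.
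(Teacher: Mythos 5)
Your proposal is correct and follows essentially the same route as the paper: both arguments reduce the claim to the fact that BIBO stability forces absolute summability of the Markov parameters, $\sum_{i=0}^\infty \abs{\markovParam_i} < \infty$, and then conclude via absolute convergence of the series on $\sphere$. The only difference is one of self-containedness: the paper obtains the summability by citing a textbook equivalence (BIBO stability $\Leftrightarrow$ $\ell^1$ impulse response), whereas you prove that implication from scratch with the uniform boundedness principle, and your tail estimate additionally yields uniform convergence on $\sphere$ rather than just pointwise convergence.
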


\begin{proof}
	Let $z\in\sphere$. Then
	\begin{displaymath}
		\sum_{i=0}^j \abs{\markovParam_i z^{-i}} = \sum_{i=0}^j \abs{\markovParam_i}\abs{z}^{-i} = \sum_{i=0}^j \abs{\markovParam_i}.
	\end{displaymath}
	\Cref{ass:LTIsystem} implies that the system is \gls{BIBO} stable, which is equivalent to the absolute convergence of the power series $\sum_{i=0}^\infty \markovParam_i$ \cite[Chapter~2.8]{ZieTF98}. Thus the formal power series $\transfer(z)$ converges for every $z\in\sphere$, which completes the proof.
\end{proof}

\begin{remark}
	\label{rem:rateOfConvergence}
	If the data is generated from the linear system in \Cref{ex:discreteStandardSystem}, then the rate of convergence depends on the spectral radius of $\E^{-1}\A$, cf.\ \cite[Proposition~3.2]{PehGW17} and \cite[Theorem~5.18]{Ant05}. More precisely, let $\rho\geq 0$ denote the spectral radius of $\E^{-1}\A$, i.e., the modulus of the largest eigenvalue of $\E^{-1}\A$. Then there exists a constant $c\in\mathbb{R}$ independent of $j$ and $\rho$ such that
	\begin{displaymath}
		\abs{\transfer_j(z)-\transfer(z)} \leq c\rho^j
	\end{displaymath}
	for all $z\in\sphere$.
\end{remark}

The relation \eqref{eq:timeFrequencyRelation} together with the convergence result given by \Cref{thm:convergenceETFE} motivates to solve the least-squares problem
\begin{equation}
	\label{eq:leastSquaresETFE}
	\argmin_{\transferEstimate_{k_i;\numTimeSteps}} \sum_{j=\jmin}^{\numTimeSteps} \left(\outVar_j - \frac{1}{\numTimeSteps}\sum_{i=1}^r \ZinpVar_{k_i} \transferEstimate_{k_i;\numTimeSteps}\fourierPoint{k_i}^j\right),
\end{equation}
with some number $\jmin\in\mathbb{N}$ that is chosen in accordance with the expected rate of convergence in \Cref{thm:convergenceETFE} and \Cref{rem:rateOfConvergence}, cf{.} \cite{PehGW17}. For more details on the choice of $\jmin$, we refer to \cite[Section~3.6]{PehGW17}. The (minimum norm) solution of \eqref{eq:leastSquaresETFE} is obtained by computing the Moore--Penrose pseudo-inverse of the matrix
\begin{equation*}
	\label{eq:FourierMatrix}
	\FourierMatrix \vcentcolon= \frac{1}{\numTimeSteps}\begin{bmatrix}
		\ZinpVar_{k_1}\fourierPoint{k_1}^{\jmin} & \ldots & \ZinpVar_{k_r}\fourierPoint{k_r}^{\jmin}\\
		\vdots & \ddots & \vdots\\
		\ZinpVar_{k_1}\fourierPoint{k_1}^{\numTimeSteps} & \ldots & \ZinpVar_{k_r}\fourierPoint{k_r}^{\numTimeSteps}
	\end{bmatrix}\in\mat{\mathbb{C}}{(\numTimeSteps-\jmin+1)}{r},
\end{equation*}
which is given by $\pseudo{\FourierMatrix} = \mathcal{V}\Sigma^{-1}\mathcal{U}^*$, where $\mathcal{U}\Sigma\mathcal{V}^* = \FourierMatrix$ denotes the rank-revealing \gls{SVD} of $\FourierMatrix$. Note that inverting tiny but nonzero singular values in $\Sigma$ poses a numerical problem. Truncating small singular values during the computation of the pseudo-inverse amounts to solving the regularized least-squares problem (cf.\ \cite{BenHM18})
\begin{equation}
	\label{eq:regularizedLeastSquaresETFE}
	\argmin_{\transferEstimate\in\mathbb{C}^{r}} \|\FourierMatrix\transferEstimate - \outVarMat\|_2^2 + \beta\|\transferEstimate\|_2^2,
\end{equation}
with
\begin{displaymath}
	\transferEstimate = \begin{bmatrix}
		\transferEstimate_{k_1;\numTimeSteps} & \ldots & \transferEstimate_{k_r;\numTimeSteps}
	\end{bmatrix}^T\qquad \text{and}\qquad
	\outVarMat = \begin{bmatrix}
		\outVar_{\jmin} & \ldots & \outVar_{\numTimeSteps}
	\end{bmatrix}^T.
\end{displaymath}
Note that the matrix $\FourierMatrix$ is dense and, depending on the number $r$ of nonzero Fourier coefficients of the input signal, the numerical solution of \eqref{eq:regularizedLeastSquaresETFE} may become unmanageably expensive. If the user is free to choose the input signal $\inpVar_j$ then the numerical issues can be reduced as follows, see also \cite{PehGW17}: It is likely that the numerical rank deficiency of $\FourierMatrix$ is avoided, if $r$ is small, i.e., if only a small number of the Fourier coefficients of the input sequence $\inpVar_j$ is nonzero and $\numTimeSteps-\jmin$ is large enough. In particular, this ensures that the least-squares problem \eqref{eq:leastSquaresETFE} is overdetermined. One way to design a specific input sequence that is sparse in the Fourier domain is to prescribe a set of interpolation points $\fourierPoint{k_i}$ for $i=1,\ldots,r$ and define
\begin{equation}
	\label{eq:sparseInputSignal}
	\inpVar_j \vcentcolon= \frac{1}{\numTimeSteps}\sum_{i=1}^r \fourierPoint{k_i}^j.
\end{equation}
Then, the \gls{FFT} implies
\begin{displaymath}
	\ZinpVar_{k;\numTimeSteps} = \sum_{j=0}^{\numTimeSteps-1} \inpVar_j\fourierPoint{k}^{-j} = \frac{1}{\numTimeSteps}\sum_{i=1}^r \sum_{j=0}^{\numTimeSteps-1} \exp\left(\frac{2\pi\imath}{\numTimeSteps}(k_i - k)j\right) = \frac{1}{\numTimeSteps} \sum_{i=1}^r \numTimeSteps\delta_{k_i,k},
\end{displaymath}
i.e., only the Fourier coefficients corresponding to the $k_i$ are nonzero. Note that in this case we do not need to compute the \gls{FFT} of $\inpVar_j$ and $\FourierMatrix$ is a generalized Vandermonde matrix.

\section{Implementation Details}
\label{sec:implementationDetailslsETFE}

The results of the previous section are formulated in a discrete-time setting. The following observation allows us to transfer the results of \Cref{sec:lsTFE} to continuous-time systems. Consider the system
\begin{equation*}
	\label{eq:standardLTIsystem}
	\begin{aligned}
		\dot{\state}(t) &= \Aa\state(t) + \B\inpVar(t),\\
		\outVar(t) &= \C\state(t),\\
		\state(0) &= \state_0
	\end{aligned}
\end{equation*}
and the control function
\begin{equation}
	\label{eq:continuousSparseFourierInput}
	\inpVar(t) = \frac{\timeStep}{\tf}\sum_{i=1}^r \exp\left(2\pi\imath k_i \frac{t}{\tf}\right).
\end{equation}
Evaluating $u$ at the time grid $t_j = j\timeStep$ with $j\in\{0,1,\ldots,\numTimeSteps\}$ reveals
\begin{displaymath}
	\inpVar(t_j) = \frac{\timeStep}{N\timeStep}\sum_{i=1}^r \exp\left(2\pi\imath k_i \frac{j\timeStep}{\numTimeSteps \timeStep}\right) = \frac{1}{\numTimeSteps}\sum_{i=1}^r \exp\left(\frac{2\pi \imath}{\numTimeSteps}k_ij\right) = \inpVar_j,
\end{displaymath}
i.e., the input signal in \eqref{eq:continuousSparseFourierInput} can be understood as a continuous representation of the discrete input signal in \eqref{eq:sparseInputSignal}. For $t>0$ we have
\begin{align*}
	\outVar(t) &= \C\int_0^t \exp(\Aa(t-s))\B\inpVar(s)\mathrm{d}s\\
	&= \frac{\C}{\numTimeSteps}\sum_{i=1}^r\left(\frac{2\pi \imath}{\tf}k_i I_{\stateDim} - \Aa\right)^{-1}\exp(\Aa t) \left(\exp\left(\left(\frac{2\pi\imath}{\tf}k_i I_{\stateDim} - \Aa\right) t\right) - I_{\stateDim}\right)\B.
\end{align*}
If we assume that $\Aa$ is asymptotically stable, then for sufficiently large $t$, we have $\exp(\Aa t) \approx 0$ and hence
\begin{displaymath}
	\outVar(t) \approx \frac{1}{\numTimeSteps}\sum_{i=1}^r \transfer\left(\frac{2\pi\imath}{\tf}k_i\right)\exp\left(2\pi\imath k_i \frac{t}{\tf}\right).
\end{displaymath}
A comparison with \eqref{eq:timeFrequencyRelation} suggests that using the the input signal \eqref{eq:continuousSparseFourierInput} in combination with the procedure in \Cref{sec:lsTFE} results in an approximation of the transfer function of the continuous-time system at the frequency $\tfrac{2\pi\imath}{\tf}k_i$. As a consequence, we can describe frequency bounds $\fmin,\fmax>0$ and choose $\widetilde{r}$ interpolation points $\widetilde{\pole}_i$ in the interval $[\imath\fmin,\imath\fmax]$. For a given final time $\tf = \numTimeSteps\timeStep$ and given $i\in\{1,\ldots,\widetilde{r}\}$, we can thus compute the number $k_i\in\mathbb{N}$ that minimizes
\begin{equation}
	\label{eq:transformationContDiscFreq}
	\left|\frac{2\pi\imath }{\tf}k_i-\widetilde{\pole}_i\right| = \min_{k\in\mathbb{N}} \left|\frac{2\pi \imath}{\tf} k-\widetilde{\pole}_i\right|.
\end{equation}
The transfer function is thus estimated at the frequencies
\begin{equation}
	\label{eq:actualFrequencies}
	\pole_i \vcentcolon= \frac{2\pi\imath }{\tf}k_i\qquad\text{for $i\in\{1,\ldots,\widetilde{r}\}$}.
\end{equation}
Note that for some choices of $\widetilde{\pole}_i$, we may have $\pole_i = \pole_j$ for $i\neq j$. Thus, we remove redundant frequencies to obtain $r$ unique frequencies.
These frequencies are related to the $\fourierPoint{k_i}$ via
\begin{equation}
	\label{eq:qLambda}
	\fourierPoint{k_i} = \exp(\pole_i\timeStep).
\end{equation}
We summarize the previous discussion and the results of \Cref{sec:lsTFE} in \Cref{alg:LSTFE}.

\begin{algorithm}[ht]
	\caption{Least-Squares Transfer Function Estimate}
	\label{alg:LSTFE}
	\begin{algorithmic}[1]
		\Statex \textbf{Input:} $\jmin$ and desired interpolation frequencies $\widetilde{\pole}_i$ ($i=1,\ldots,\widetilde{r}$)
		\Statex \textbf{Output:} actual frequencies $\pole_i$ ($i=1,\ldots,r$) together with estimates of the transfer function at these frequencies
		\Statex
		\State Solve the minimization problem \eqref{eq:transformationContDiscFreq} for $i=1,\ldots,\widetilde{r}$
		\State Remove redundant frequencies to obtain unique frequencies $\pole_i$ according to \eqref{eq:actualFrequencies} and corresponding points $\fourierPoint{k_i}$, cf.\ \eqref{eq:qLambda}, for $i=1,\ldots,r$
		\State Construct the input signal $\inpVar_j$ according to \eqref{eq:sparseInputSignal} and obtain measurements $\outVar_j$
		\State Compute the Fourier coefficients of $\inpVar_j$ and assemble the matrix $\FourierMatrix$
		\State Solve the regularized minimization problem \eqref{eq:regularizedLeastSquaresETFE}
	\end{algorithmic}
\end{algorithm}

In our examples, we use a logarithmic sampling of the frequency interval $[\imath\fmin,\imath\fmax]$ and pick $\jmin$ such that \SI{75}{\percent} of the time series is used for the least-squares problem \eqref{eq:leastSquaresETFE}. For details about the choice of $\jmin$ we refer to \cite{PehGW17}.

\section{Structured Interpolation Framework}
\label{sec:structuredRealization}

The term structured \gls{LTI} operator in \Cref{problem:generalSetting} may in general have wide-ranging meanings. Here, according to the examples given in \Cref{tab:StructureExamples}, we consider a structure which is induced by a linearly independent function family $\{\hfunc_1,\, \hfunc_2,\, \ldots,\, \hfunc_\numFunctions\}$ and takes the form
\begin{equation}
	\label{eq:FOMstructured}
	\transfer(\frequency) = \C\left(\sum_{k=1}^\numFunctions \hfunc_k(\frequency) \A_k\right)^{-1}\B,
\end{equation}
where $\C\in\mat{\mathbb{R}}{1}{\stateDim}$, $A_k\in\mat{\mathbb{R}}{\stateDim}{\stateDim}$ for $k=1,\ldots,\numFunctions$, and $B\in\mat{\mathbb{R}}{\stateDim}{1}$. We assume in all that follows that the functions $\hfunc_k\colon\mathbb{C}\to\mathbb{C}$ are meromorphic and by standard abuse of notation, we use $\transfer(\frequency)$ to denote either the system itself or the transfer function of the system evaluated at the point $\frequency\in\mathbb{C}$. 

Following \cite{SchUBG18}, we can enforce $\numFunctions\numData$ interpolation conditions for the structure \eqref{eq:FOMstructured} and hence, we assume that our estimate of the transfer function provides $\numFunctions\numData$ distinct points in the complex plane. More precisely, we assume to have the \emph{interpolation data}
\begin{equation}
	\label{eq:interpolationData}
	\{\left(\pole_i,\interpData_i \vcentcolon=\transfer(\pole_i)\right)\in\mathbb{C}^2 \mid i=1,\ldots,\numFunctions\numData\}
\end{equation}
available.

\begin{problem}[Structured realization problem]
	\label{problem:StructuredRealization}
	Given the data in \eqref{eq:interpolationData} and a system structure associated with the linearly independent function family $\{\hfunc_1,\ldots,\hfunc_\numFunctions\}$, find matrices $\Ared_k\in\mat{\mathbb{C}}{\stateDim}{\stateDim}$, $k=1,\ldots,\numFunctions$, $\Bred\in\mat{\mathbb{C}}{\stateDim}{1}$, and $\Cred\in\mat{\mathbb{C}}{1}{\stateDim}$, such that the transfer function
	\begin{equation*}
		\label{eq:structuredTransferFunction}
		\transferRed(\frequency) = \Cred\left(\sum_{k=1}^{\numFunctions} \hfunc_k(\frequency)\Ared_k\right)^{-1}\Bred
	\end{equation*}
	satisfies the interpolation conditions
	\begin{equation}
		\label{eq:interpolationCondition}
		\transferRed(\pole_i) = \interpData_i \qquad\text{for } i=1,\ldots,\numFunctions\numData.
	\end{equation}
\end{problem}

In general we cannot expect that the realization in \Cref{problem:StructuredRealization} is real, i.e., that all the matrices $\Ared_k$, $\Bred$, and $\Cred$ are real matrices. If we however add the complex conjugate points $(\overline{\pole}_i,\overline{\interpData}_i)$ to the interpolation data \eqref{eq:interpolationData}, then \Cref{problem:StructuredRealization} can be solved with real matrices \cite{SchUBG18}. Since the \gls{ETFE} framework provides only points on the imaginary axis, we simply add the complex conjugate data and assume $\numData$ to be an even number and
\begin{equation}
	\label{eq:closedConjugation}
	(\overline{\pole_{2i-1}},\overline{\interpData_{2i-1}}) = (\pole_{2i},\interpData_{2i})\qquad \text{for } i=1,\ldots,\frac{\numFunctions\numData}{2}.
\end{equation}

\Cref{problem:StructuredRealization} was solved in a more general setting in \cite{SchUBG18} and to make the presentation self-contained, we recall the important results tailored to our specific setting. Let $Q_{\LeftData} \vcentcolon= \ceil*{\tfrac{\numFunctions}{2}}$, $Q_{\RightData} = \floor*{\tfrac{\numFunctions}{2}}$ such that $Q_{\LeftData} + Q_{\RightData} = \numFunctions$. Moreover, we rename the interpolation data as
\begin{align*}
	\leftPoint_{j;i} &\vcentcolon= \pole_{2(j-1)\numData+i}, & \leftData_{j;i} &\vcentcolon= \interpData_{2(j-1)\numData+i}, & \text{for } j&=1,\ldots,Q_{\LeftData},\ i=1,\ldots,\numData,\\
	\rightPoint_{j;i} &\vcentcolon= \pole_{(2j-1)\numData+i}, & \rightData_{j;i} &\vcentcolon= \interpData_{(2j-1)\numData+i}, & \text{for } j&=1,\ldots,Q_{\RightData},\ i=1,\ldots,\numData,
\end{align*}
and define the matrix
\begin{equation*}
	T = \mathrm{blkdiag}\left(\frac{1}{\sqrt{2}}\begin{bmatrix}
1 & -\imath\\ 1 & \imath
\end{bmatrix},\,\ldots,\,\frac{1}{\sqrt{2}}\begin{bmatrix}
1 & -\imath\\ 1 & \imath
\end{bmatrix}\right)\in\mat{\mathbb{C}}{\numData}{\numData}.
\end{equation*}

\begin{theorem}
	\label{thm:additionalInterpolation}
	Assume that the functions $\hfunc_k$ satisfy the Haar condition \cite{Che82}. the interpolation data \eqref{eq:interpolationData} satisfies \eqref{eq:closedConjugation}, and $\interpData_i \neq 0$ for all $i=1,\ldots,\numFunctions\numData$. Define the matrices $\A_k\in\mat{\mathbb{C}}{\numData}{\numData}$ entry-wise via the linear systems
	{\small
	\begin{equation}
	\label{eq:HaarSystem}
	\begin{bmatrix}
		\leftData_{1;i} & & & & &\\
		& \ddots &  & & &\\
		& & \leftData_{Q_\LeftData;i} & & &\\
		& & & \rightData_{1;j} & & \\
		& & & & \ddots & \\
		& & & & & \rightData_{Q_\RightData;j}
	\end{bmatrix}\begin{bmatrix}
		\hfunc_1(\leftPoint_{1;i}) & \ldots & \hfunc_\numFunctions(\leftPoint_{1;i})\\
		\vdots & & \vdots\\
		\hfunc_1(\leftPoint_{Q_{\LeftData};i}) & \ldots & \hfunc_\numFunctions(\leftPoint_{Q_{\LeftData};i}) \\
		\hfunc_1(\rightPoint_{1;j}) & \ldots & \hfunc_\numFunctions(\rightPoint_{1;j})\\
		\vdots & & \vdots\\
		\hfunc_1(\rightPoint_{Q_{\RightData};j}) & \ldots & \hfunc_\numFunctions(\rightPoint_{Q_{\RightData};j})
	\end{bmatrix}\begin{bmatrix}
		[\A_1]_{i,j}\\
		[\A_2]_{i,j}\\
		\vdots\\
		[\A_\numFunctions]_{i,j}
	\end{bmatrix} = \begin{bmatrix}
		1\\1\\\vdots\\1
	\end{bmatrix}.
	\end{equation}}%
	Then the matrices $\Ared_k \vcentcolon= T^*\A_kT$ for $k=1,\ldots,\numFunctions$, $\Bred \vcentcolon= T^* \begin{bmatrix}
		1 & \ldots & 1
	\end{bmatrix}^T$, and $\Cred \vcentcolon= \Bred^T$ are real and the structured realization satisfies the interpolation conditions, i.e.,
	\begin{displaymath}
		\transferRed(\pole_i) \vcentcolon= \Cred\left(\sum_{k=1}^\numFunctions \hfunc_k(\pole_i)\Ared\right)^{-1}\Bred = \interpData_i\qquad \text{for}\ i=1,\ldots,\numFunctions\numData,
	\end{displaymath}
	provided that $\sum_{k=1}^\numFunctions \hfunc_k(\pole_i)\Ared$ is nonsingular for $i=1,\ldots,\numFunctions\numData$.
\end{theorem}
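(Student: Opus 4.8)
The plan is to first reduce the assertion to a statement about the \emph{untransformed} realization built from the matrices $\A_k$ and the all-ones vector $\bs{1} = \begin{bmatrix} 1 & \cdots & 1\end{bmatrix}^T \in \mathbb{R}^\numData$, and only afterwards deal with the congruence by $T$. Writing $M(\frequency) \vcentcolon= \sum_{k=1}^\numFunctions \hfunc_k(\frequency)\A_k$, we have $\sum_{k=1}^\numFunctions \hfunc_k(\frequency)\Ared_k = T^* M(\frequency)T$. Since the $2\times 2$ blocks of $T$ are unitary, $T$ itself is unitary, i.e.\ $T^{-1}=T^*$; moreover a direct block computation gives $\bs{1}^T\overline{T} = \bs{1}^T T$, whence $\Cred = \Bred^T = (T^*\bs{1})^T = \bs{1}^T\overline{T} = \bs{1}^T T$ while $\Bred = T^*\bs{1} = T^{-1}\bs{1}$. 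Consequently $T$ acts as an ordinary state-space similarity transform and preserves the structured transfer function, $\transferRed(\frequency) = \Cred(T^*M(\frequency)T)^{-1}\Bred = \bs{1}^T M(\frequency)^{-1}\bs{1}$ wherever $M(\frequency)$ is invertible. This leaves two things to prove: that $\bs{1}^T M(\pole_i)^{-1}\bs{1} = \interpData_i$, and that $\Ared_k,\Bred,\Cred$ are real.

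For the interpolation I would first record that the construction is well posed: the coefficient matrix of \eqref{eq:HaarSystem} factors as the diagonal matrix carrying the (nonzero) data values times the $\numFunctions\times\numFunctions$ generalized Vandermonde matrix $[\hfunc_k(\cdot)]$ evaluated at the $\numFunctions$ pairwise distinct points $\leftPoint_{1;i},\dots,\leftPoint_{Q_\LeftData;i},\rightPoint_{1;j},\dots,\rightPoint_{Q_\RightData;j}$, and the Haar condition makes the latter nonsingular, so each entry $[\A_k]_{i,j}$ is uniquely determined. The key observation is that \eqref{eq:HaarSystem} is nothing but a statement about single entries of $M$: reading off its rows shows $[M(\leftPoint_{p;i})]_{i,j} = 1/\leftData_{p;i}$ and $[M(\rightPoint_{q;j})]_{i,j} = 1/\rightData_{q;j}$. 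Crucially, the left point $\leftPoint_{p;i}$ depends only on the row index $i$ and the right point $\rightPoint_{q;j}$ only on the column index $j$, so these identities hold for \emph{every} $j$ and \emph{every} $i$ respectively, giving $e_i^T M(\leftPoint_{p;i}) = \leftData_{p;i}^{-1}\bs{1}^T$ and $M(\rightPoint_{q;j})e_j = \rightData_{q;j}^{-1}\bs{1}$. Inverting $M$ at these points (which is exactly the nonsingularity proviso) yields $\bs{1}^T M(\leftPoint_{p;i})^{-1} = \leftData_{p;i}\,e_i^T$ and $M(\rightPoint_{q;j})^{-1}\bs{1} = \rightData_{q;j}\,e_j$; contracting with $\bs{1}$ on the remaining side and using $e_i^T\bs{1}=1$ shows $\bs{1}^T M^{-1}\bs{1}$ equals $\leftData_{p;i}$ resp.\ $\rightData_{q;j}$, i.e.\ the prescribed value $\interpData$ at that node. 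Combined with the transfer-function identity above this proves \eqref{eq:interpolationCondition}.

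For realness I would exploit \eqref{eq:closedConjugation}. Under the renaming this pairs, within each left or right block, the within-block index $2s-1$ with $2s$ so that the corresponding points and data are complex conjugates; denote this involution by $i\mapsto\hat{\imath}$. Assuming the $\hfunc_k$ are real in the sense $\hfunc_k(\overline{\frequency})=\overline{\hfunc_k(\frequency)}$ (as holds for all structures in \Cref{tab:StructureExamples}), conjugating the system \eqref{eq:HaarSystem} for the entry $(i,j)$ shows that $\overline{[\A_k]_{i,j}}$ solves the system associated with $(\hat{\imath},\hat{\jmath})$; by the uniqueness established above this forces $[\A_k]_{\hat{\imath},\hat{\jmath}} = \overline{[\A_k]_{i,j}}$. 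Hence each $2\times2$ block of $\A_k$ relating the index pairs $\{2s-1,2s\}$ and $\{2t-1,2t\}$ has the form $\left[\begin{smallmatrix} a & b\\ \overline{b} & \overline{a}\end{smallmatrix}\right]$, and a short computation shows $B^*\left[\begin{smallmatrix} a & b\\ \overline{b} & \overline{a}\end{smallmatrix}\right]B$ is real for the $2\times2$ block $B$ of $T$; the same block computation gives $B^*\left[\begin{smallmatrix}1\\1\end{smallmatrix}\right]=\left[\begin{smallmatrix}\sqrt{2}\\0\end{smallmatrix}\right]$, so $\Bred$ and $\Cred=\Bred^T$ are real as well.

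I expect the bookkeeping in the last step to be the main obstacle: one has to verify carefully that the global conjugation pairing \eqref{eq:closedConjugation} on the $\pole_i$ really descends, through the block-wise renaming into $\leftPoint,\leftData,\rightPoint,\rightData$, to the \emph{within-block} involution $i\mapsto\hat{\imath}$ matched by the $2\times2$ structure of $T$, and that the reflection property $\hfunc_k(\overline{\frequency})=\overline{\hfunc_k(\frequency)}$ is indeed available. Everything else — well-posedness, the entrywise reading of \eqref{eq:HaarSystem}, and the two rank-one inversions — is routine once the right reformulation of \eqref{eq:HaarSystem} is in hand.
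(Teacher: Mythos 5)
Your proof is correct, but it takes a genuinely different route from the paper: the paper's own proof is two sentences long --- it notes that the Haar condition together with $\interpData_i\neq 0$ makes \eqref{eq:HaarSystem} uniquely solvable, and then delegates both the interpolation property and the realness entirely to \cite[Theorem~3.12 and Lemma~3.15]{SchUBG18}. You instead reconstruct those two external results from scratch. Your key step --- reading \eqref{eq:HaarSystem} entrywise as $[\,\sum_k \hfunc_k(\leftPoint_{p;i})\A_k]_{i,j}=1/\leftData_{p;i}$ and $[\,\sum_k \hfunc_k(\rightPoint_{q;j})\A_k]_{i,j}=1/\rightData_{q;j}$, observing that the left points depend only on the row index and the right points only on the column index, and hence obtaining the rank-one identities $e_i^T M(\leftPoint_{p;i})=\leftData_{p;i}^{-1}\bs{1}^T$ and $M(\rightPoint_{q;j})e_j=\rightData_{q;j}^{-1}\bs{1}$ --- is exactly the mechanism behind the cited Theorem~3.12, and your unitary-similarity reduction ($T^*T=I$, $\bs{1}^T\overline{T}=\bs{1}^TT$) together with the $2\times 2$ block computation showing $B^*\left[\begin{smallmatrix} a & b\\ \overline{b} & \overline{a}\end{smallmatrix}\right]B$ is real reproduces Lemma~3.15. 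What the paper's approach buys is brevity; what yours buys is a verifiable, self-contained argument that also surfaces two points the theorem statement leaves implicit: (i) realness genuinely requires the reflection property $\hfunc_k(\overline{\frequency})=\overline{\hfunc_k(\frequency)}$, which is nowhere stated in the theorem and is inherited only from the standing assumptions of \cite{SchUBG18} (it does hold for every structure in \Cref{tab:StructureExamples}); and (ii) the global conjugation pairing \eqref{eq:closedConjugation} descends to the within-block involution matched by $T$ precisely because $\numData$ is assumed even, so that the block offsets $2(j-1)\numData$ and $(2j-1)\numData$ are even --- the bookkeeping worry you flag at the end does resolve, for exactly this reason.
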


\begin{proof}
	The Haar condition together with $\interpData_i\neq 0$ for $i=1,\ldots,\numFunctions\numData$ ensures that the linear system \eqref{eq:HaarSystem} has a unique solution. The result follows from \cite[Theorem~3.12 and Lemma~3.15]{SchUBG18}.
\end{proof}

An important aspect in \Cref{thm:additionalInterpolation} is that $\Kred(s) \vcentcolon= \sum_{k=1}^\numFunctions \hfunc_k(s)\Ared$ is (numerically) nonsingular at all interpolation points $\pole_i$, which may not be true if we add more and more data. If the condition
\begin{equation}
	\label{eq:redundantAss}
	\mathrm{rank} \left(\sum_{k=1}^\numFunctions \hfunc_k(\pole_i)\Ared_k\right) = \mathrm{rank}\left(\begin{bmatrix}
		\Ared_1 & \cdots & \Ared_\numFunctions
	\end{bmatrix}\right) = \mathrm{rank}\left(\begin{bmatrix}
		\Ared_1 \\ \vdots \\ \Ared_\numFunctions
	\end{bmatrix}\right) =\vcentcolon r.
\end{equation}
is satisfies for all driving frequencies $\pole_i$, then the redundancy in the data can be removed as follows.

\begin{theorem}
	\label{thm:truncationAdditionalData}
	Let the realization $\Hred(s) = \Cred(\sum_{k=1}^\numFunctions \hfunc_k(s)\Ared_k)^{-1}\Bred$ be constructed as in \Cref{thm:additionalInterpolation} and assume that the matrices satisfy the rank condition. Pick any $i\in\{1,\ldots,\numFunctions\numData\}$ and let
	\begin{equation}
		\label{eq:truncationSVD}
		\Kred(\pole_i) = \sum_{k=1}^\numFunctions \hfunc_k(\pole_i)\Ared_k = \begin{bmatrix} W_1 & W_2\end{bmatrix}\begin{bmatrix}\Sigma & 0\\
		0 & 0
		\end{bmatrix}
		\begin{bmatrix}
			V_1^*\\
			V_2^*
		\end{bmatrix} 
	\end{equation}
	with $V_1,W_1\in\mat{\mathbb{C}}{\numData}{r}$, $V_2,W_2\in\mat{\mathbb{C}}{\numData}{(\numData-r)}$, and nonsingular matrix $\Sigma\in\mathbb{R}^{r\times r}$ denote the \gls{SVD} of $\Kred(\pole_i)$. For $k=1,\ldots,\numFunctions$ define
	\begin{displaymath}
		\Ared_{k;r} \vcentcolon= W_1^*\Ared_k V_1,\qquad \Bred_r \vcentcolon= W_1^*\Bred, \qquad\text{and}\qquad \Cred_r \vcentcolon = \Cred V_1.
	\end{displaymath}	
	Then the realization $\Hred_r(s) = \Cred_r(\sum_{k=1}^\numFunctions \hfunc_k\left(s\right)\Ared_{k;r})^{-1}\Bred_r$ satisfies the interpolation conditions \eqref{eq:interpolationCondition}.
\end{theorem}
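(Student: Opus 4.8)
The goal is to show $\Hred_r(\pole_\ell)=\interpData_\ell$ for every interpolation point $\pole_\ell$, $\ell=1,\ldots,\numFunctions\numData$, where $\pole_i$ now denotes the single point fixed for the \gls{SVD} \eqref{eq:truncationSVD}. The subtlety to keep in mind is that the rank condition \eqref{eq:redundantAss} forces $\Kred(\pole_\ell)$ to be \emph{singular} whenever $r<\numData$, so $\Hred(\pole_\ell)=\Cred\Kred(\pole_\ell)^{-1}\Bred$ is not literally defined and the interpolation property of the unreduced model cannot be invoked directly. The plan is therefore to rewrite $\Hred_r(\pole_\ell)$ as a pseudo-inverse expression in the original data, and then to recover $\interpData_\ell$ from the algebraic identities underlying \Cref{thm:additionalInterpolation}.

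First I would use \eqref{eq:redundantAss} to pin down the column and row spaces of $\Kred(\pole_\ell)$. Every column of $\Kred(\pole_\ell)=\sum_{k}\hfunc_k(\pole_\ell)\Ared_k$ is a linear combination of columns of the $\Ared_k$, so $\mathrm{range}(\Kred(\pole_\ell))\subseteq\mathrm{range}([\,\Ared_1\ \cdots\ \Ared_\numFunctions\,])$; the two spaces have the same dimension $r$ by \eqref{eq:redundantAss} and hence coincide, and likewise $\mathrm{range}(\Kred(\pole_\ell)^*)=\mathrm{range}([\,\Ared_1^*\ \cdots\ \Ared_\numFunctions^*\,])$. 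Both right-hand sides are independent of $\pole_\ell$, so for \emph{every} $\ell$ we get $\mathrm{range}(\Kred(\pole_\ell))=\mathrm{range}(W_1)$ and $\mathrm{range}(\Kred(\pole_\ell)^*)=\mathrm{range}(V_1)$. Consequently $W_1W_1^*$ and $V_1V_1^*$ act as the identity on the range and corange of $\Kred(\pole_\ell)$, which yields the factorization
\begin{equation*}
	\Kred(\pole_\ell)=W_1\,\Kred_r(\pole_\ell)\,V_1^*,\qquad \Kred_r(\pole_\ell)\vcentcolon= W_1^*\Kred(\pole_\ell)V_1=\textstyle\sum_{k=1}^{\numFunctions}\hfunc_k(\pole_\ell)\Ared_{k;r}.
\end{equation*}
Since $W_1,V_1$ have orthonormal columns and $\mathrm{rank}\,\Kred(\pole_\ell)=r$, the $r\times r$ matrix $\Kred_r(\pole_\ell)$ is nonsingular, so $\Hred_r$ is well defined at $\pole_\ell$; moreover $\Kred(\pole_\ell)^{\dagger}=V_1\Kred_r(\pole_\ell)^{-1}W_1^*$, whence
\begin{equation*}
	\Hred_r(\pole_\ell)=\Cred_r\,\Kred_r(\pole_\ell)^{-1}\Bred_r=\Cred V_1\,\Kred_r(\pole_\ell)^{-1}W_1^*\Bred=\Cred\,\Kred(\pole_\ell)^{\dagger}\Bred.
\end{equation*}

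It then remains to show $\Cred\,\Kred(\pole_\ell)^{\dagger}\Bred=\interpData_\ell$. Here I would return to the interpolation identities that drive the proof of \Cref{thm:additionalInterpolation} in \cite{SchUBG18}: the construction provides \emph{explicit} vectors (built from the Haar data, not from inverting $\Kred$) realising, for each right point, $\Kred(\pole_\ell)\xi_\ell=\Bred$ together with $\Cred\xi_\ell=\interpData_\ell$, and dually, for each left point, $\eta_\ell^*\Kred(\pole_\ell)=\Cred$ with $\eta_\ell^*\Bred=\interpData_\ell$. As algebraic identities among the assembled matrices these persist in the rank-deficient regime. A single right-point identity gives $\Bred\in\mathrm{range}(\Kred(\pole_\ell))$ and a single left-point identity gives $\Cred^*\in\mathrm{range}(\Kred(\pole_\ell)^*)$; by the range coincidence above both memberships then hold at \emph{every} $\pole_\ell$ (assuming, as for the structures in \Cref{tab:StructureExamples}, that the left and right point families are both nonempty). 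For a right point, using $\Kred(\pole_\ell)^{\dagger}\Kred(\pole_\ell)=V_1V_1^*$ and $\Cred V_1V_1^*=\Cred$,
\begin{equation*}
	\Cred\,\Kred(\pole_\ell)^{\dagger}\Bred=\Cred\,\Kred(\pole_\ell)^{\dagger}\Kred(\pole_\ell)\,\xi_\ell=\Cred V_1V_1^*\,\xi_\ell=\Cred\,\xi_\ell=\interpData_\ell,
\end{equation*}
and the left points follow from the transposed computation with $\Kred(\pole_\ell)\Kred(\pole_\ell)^{\dagger}=W_1W_1^*$. Together with the previous step this gives $\Hred_r(\pole_\ell)=\interpData_\ell$ for all $\ell$.

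The main obstacle is this last step: extracting the column/row interpolation relations $\Kred(\pole_\ell)\xi_\ell=\Bred$ and $\eta_\ell^*\Kred(\pole_\ell)=\Cred$ in a form that is manifestly independent of the invertibility of $\Kred(\pole_\ell)$, since this is exactly where the singularity introduced by \eqref{eq:redundantAss} bites and where one must engage with the explicit Loewner-type structure of the Haar system \eqref{eq:HaarSystem}. A tempting shortcut that avoids reopening \cite{SchUBG18} is a removable-singularity argument, as $\Hred$ is meromorphic and analytic off the interpolation points; but identifying $\lim_{s\to\pole_\ell}\Hred(s)$ with the pseudo-inverse value $\Cred\Kred(\pole_\ell)^{\dagger}\Bred$ is itself delicate and can fail when $\Kred$ degenerates through a nontrivial Jordan structure. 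The range conditions established above are precisely what exclude this pathology and reconcile the two viewpoints.
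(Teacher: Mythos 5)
Your proof is correct, and its first half is the same argument as the paper's, written in dual language: the paper's own proof notes that $\ker\bigl(\begin{bmatrix}\Ared_1^* & \cdots & \Ared_\numFunctions^*\end{bmatrix}^*\bigr)\subseteq\ker\bigl(\sum_{k}\hfunc_k(s)\Ared_k\bigr)$ holds trivially, so the rank condition \eqref{eq:redundantAss} forces these kernels (and the corresponding left kernels) to coincide at the interpolation points, yielding $\Ared_kV_2=0$ and $W_2^*\Ared_k=0$ for all $k$ --- which is exactly your range statement $\mathrm{range}(\Kred(\pole_\ell))=\mathrm{range}(W_1)$ and $\mathrm{range}(\Kred(\pole_\ell)^*)=\mathrm{range}(V_1)$ for every $\ell$. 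The divergence is in the second half: the paper stops there and cites \cite[Theorem~3.19]{SchUBG18}, whereas you carry that step out yourself via the factorization $\Kred(\pole_\ell)=W_1\Kred_r(\pole_\ell)V_1^*$, the identity $\Hred_r(\pole_\ell)=\Cred\Kred(\pole_\ell)^{\dagger}\Bred$, and the explicit interpolation vectors of the Haar system. What you flag as the main obstacle is in fact unproblematic: by \eqref{eq:HaarSystem}, the $j$-th column of $\sum_k\hfunc_k(\rightPoint_{m;j})\A_k$ equals $\rightData_{m;j}^{-1}\begin{bmatrix}1&\cdots&1\end{bmatrix}^T$, so $\xi=\rightData_{m;j}e_j$ satisfies $\bigl(\sum_k\hfunc_k(\rightPoint_{m;j})\A_k\bigr)\xi=\B$ and $\C\xi=\rightData_{m;j}$, with the dual row relations at the left points; these are linear identities requiring no invertibility whatsoever, and they survive the congruence by $T$ because $\overline{T}T^*$ is the block permutation exchanging conjugate pairs, which fixes the all-ones vector. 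Your caveat that both point families must be nonempty is also right, and is automatic for $\numFunctions\geq 2$. The trade-off between the two routes: the paper's proof is two lines but defers the essential content --- why interpolation survives compression when $\Kred(\pole_\ell)$ is singular, and why $\Kred_r(\pole_\ell)$ is even invertible --- to the external reference, while your version makes the mechanism explicit (range coincidence plus the memberships $\Bred\in\mathrm{range}(W_1)$ and $\Cred^*\in\mathrm{range}(V_1)$) and surfaces the point, glossed over in the statement itself, that the unreduced $\Hred$ cannot be evaluated at the $\pole_\ell$ by matrix inversion at all.
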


\begin{proof}
	The rank assumptions \eqref{eq:redundantAss} and
	\begin{displaymath}
		\ker\left(\begin{bmatrix}
			\Ared_1^* & \cdots & \Ared_K^*
		\end{bmatrix}^*\right) \subseteq \ker(\sum_{k=1}^K\hfunc_k(s)\Ared_k)
	\end{displaymath}
	imply $\Ared_kV_2 = 0$ for all $k=1,\ldots,\numFunctions$. By the same reasoning we obtain $W_2^*\Ared_k = 0$ for all $k=1,\ldots,\numFunctions$. Then, the result follows using \cite[Theorem~3.19]{SchUBG18}.
\end{proof}

Summarizing \Cref{thm:additionalInterpolation} and \Cref{thm:truncationAdditionalData}, we can construct a structured realization, i.e., solve \Cref{problem:StructuredRealization}, via \Cref{alg:structuredRealization}.

\begin{algorithm}[ht]
	\caption{Structured Realization}
	\label{alg:structuredRealization}
	\begin{algorithmic}[1]
		\Statex \textbf{Input:} Interpolation data \eqref{eq:interpolationData}, function family $\lbrace\hfunc_1,\ldots,\hfunc_\numFunctions\rbrace$ with $\numFunctions\in\mathbb{N}$.
		\Statex \textbf{Output:} Matrices $\Ared_1,\ldots,\Ared_\numFunctions$, $\Bred$, and $\Cred$ such that $\Hred(s) = \Cred(\sum_{k=1}^\numFunctions h(s)\Ared_k)^{-1}\Bred$ interpolates the data
		\Statex
		\State Add the complex conjugate data as in \eqref{eq:closedConjugation}.
		\State Construct the realization $\Hred(s) = \Cred(\sum_{k=1}^\numFunctions h(s)\Ared_k)^{-1}\Bred$ as in \Cref{thm:additionalInterpolation} by solving the linear systems \eqref{eq:HaarSystem}.
		\State Pick any $i\in\{1,\ldots,\numFunctions\numData\}$ and compute $V_1,W_1$ via the singular value decomposition as in \eqref{eq:truncationSVD}.
		\State Set $\Ared_k \vcentcolon= W_1^*\Ared_kV_1$, $\Bred \vcentcolon= W_1^*\Bred$, and $\Cred \vcentcolon= \Cred V_1$
	\end{algorithmic}
\end{algorithm}

\section{Estimation of Parameters}
\label{sec:parameterEstimation}

Comparing \Cref{problem:StructuredRealization} with the structured realizations in \Cref{tab:StructureExamples}, we observe that the coefficient functions $\hfunc_k$ may depend on possibly unknown parameters like the time delay~$\delay$, which also need to be identified. Let us thus consider a linearly independent function family
\begin{displaymath}
	\hfunc_k:\mathbb{C}\times\paramSet \to\mathbb{C}\qquad\text{for}\ k=1,\ldots,\numFunctions
\end{displaymath}
with a compact parameter set $\paramSet\subseteq\mathbb{R}^{\paramSetDim}$. Observe that for any fixed $\param\in\paramSet$ we can use \Cref{alg:structuredRealization} to obtain a realization 
\begin{equation}
	\label{eq:pROM}
	\Hred(s,\param) = \Cred(\param)\left(\sum_{k=1}^{\numFunctions} \hfunc_k(s,\param)\Ared_k(\param)\right)^{-1}\Bred(\param)
\end{equation}
which interpolates the data for this specific parameter. If further data
\begin{equation}
	\label{eq:testData}
	\{\left(\zeta_j,\psi_j\vcentcolon= \transfer(\zeta_j)\right)\in\mathbb{C}^2\mid j=1,\ldots,\numTestData\}
\end{equation}
are available, in the following referred to as \emph{test data}, we can compute the least-squares mismatch
\begin{equation}
	\label{eq:lsErrorFunctional}
	\lsError:\paramSet\to\mathbb{R},\qquad \param \mapsto \sum_{j=1}^\numTestData\left\|\psi_j - \Hred(\zeta_j,\param)\right\|^2
\end{equation}
between evaluations of the transfer function \eqref{eq:pROM} and this data. A simple strategy, as for instance proposed in \cite{SchU16}, is to minimize \eqref{eq:lsErrorFunctional} over the parameter set $\paramSet$. Note that if an optimal parameter $\param^\star\in\paramSet$ is determined, one can add the test data \eqref{eq:testData} to the interpolation data \eqref{eq:interpolationData} and compute a realization that interpolates also the test data via \Cref{alg:structuredRealization}.

\section{A Case Study with a Delay Example}
\label{sec:caseStudy}

To illustrate the framework presented in this paper, we consider the delay example from \cite{BeaG09}. More precisely, we consider the \gls{DDAE}
\begin{align*}
	\E\dot{\state}(t) &= \Aa\state(t) + \Ab\state(t-\delay) + \B\inpVar(t), & \text{for $t>0$},\\
	\outVar(t) &= \C\state(t), & \text{for $t>0$},\\
	\state(t) &= 0, & \text{for $t\in[-\delay,0]$},
\end{align*}
with $\dimFOM\times\dimFOM$ matrices
\begin{align*}
	\E &\vcentcolon= \nu I_{\dimFOM} + T, & \Aa &\vcentcolon= \frac{1}{\delay}\left(\frac{1}{\zeta}+1\right)(T-\nu I_{\dimFOM}), & \Ab &\vcentcolon= \frac{1}{\delay}\left(\frac{1}{\zeta}-1\right)(T-\nu I_{\dimFOM}),
\end{align*}	
where $T$ is an $\dimFOM\times\dimFOM$ matrix with ones on the sub- and superdiagonal, at the $(1,1)$, and at the $(\dimFOM,\dimFOM)$ position and zeros everywhere else. We choose $\dimFOM = 12$, $\delay=1$, $\zeta=0.01$, and $\nu = 5$. The input matrix $B\in\mathbb{R}^{\dimFOM}$ has ones in the first two components and zeros everywhere else, and we choose $C = B^T$. 

We simulate the model twice to obtain estimates of the transfer function: once for setting up the initial model (i.e., for collecting the interpolation data) and once for obtaining additional test data, such that we can estimate the delay via minimizing the least-squares mismatch in \eqref{eq:lsErrorFunctional}. The simulation parameters are listed in \Cref{tab:lsETFEsimulationParameters}. Note that we use higher frequencies to construct the input function for the test data than for the input function for the interpolation data. These higher frequencies enforce a smaller time step $\timeStep$. In order to have a similar computational cost for both input functions, we therefore adapted the final time $\tf$. Consequently, \Cref{thm:convergenceETFE} suggests that we can expect a better accuracy for the transfer function estimates obtained from the simulation used for the interpolation data.

The resulting transfer function estimates are compared to the true values in \Cref{tab:lsETFEinterpolationData,tab:lsETFEtestData} and visualized in \Cref{fig:lsETFE}. Note that in this section, all numerical values are rounded to two decimal places.
\begin{table}
	\centering
	\caption{Simulation parameters to obtain the transfer function estimates via lsTFE}
	\label{tab:lsETFEsimulationParameters}
	\footnotesize
	\begin{tabular}{llll}
		\toprule
		\textbf{description} & \textbf{variable} & \textbf{interpolation data} & \textbf{test data}\\\midrule
		final time & $\tf$ & 10000 & 40\\
		time step & $\timeStep$ & \num{5e-3} & \num{1e-5}\\
		frequency sampling interval & $[\fmin,\fmax]$ & $[$\num{1e-4}$,$\num{1e0}$]$ & $[$\num{1e0.3},\num{1e1}$]$\\
		requested number of frequency estimates & $\widetilde{r}$ & 10 & 6\\
		actual number of frequency estimates & $r$ & 8 & 6\\\bottomrule
	\end{tabular}
\end{table}
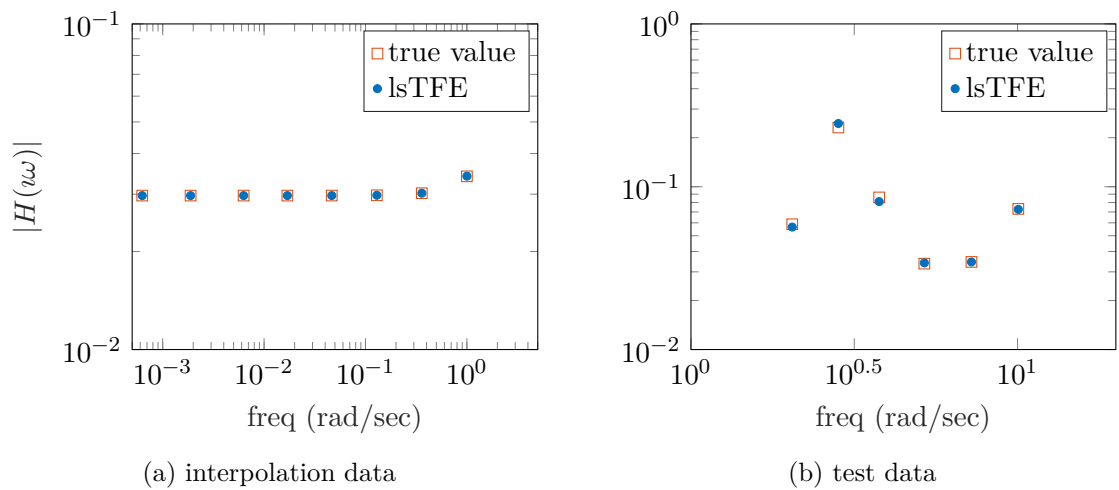
\begin{figure}
	\centering
	\begin{subfigure}[c]{0.48\textwidth}
		\centering
		% This file was created by matlab2tikz.
%
\definecolor{mycolor1}{rgb}{0.00000,0.44700,0.74100}%
\definecolor{mycolor2}{rgb}{0.85000,0.32500,0.09800}%
\begin{tikzpicture}

\begin{axis}[%
width=2.1in,
height=1.7in,
at={(0.758in,0.481in)},
scale only axis,
xmode=log,
xmin=0.0005,
xmax=5,
xminorticks=true,
xlabel style={font=\color{white!15!black}},
xlabel={freq (rad/sec)},
ymode=log,
ymode=log,
ymin=0.01,
ymax=0.1,
ylabel style={font=\color{white!15!black}},
ylabel={$|\transfer(\imath\omega)|$},
axis background/.style={fill=white},
legend style={legend cell align=left, align=left, draw=white!15!black}
]

\addplot [only marks,color=mycolor2, draw=none, mark=square, mark options={solid, mycolor2}]
  table[row sep=crcr]{%
0.000628318530717959	0.0297032713067957\\
0.00188495559215388	0.0297032835014317\\
0.00628318530717958	0.0297034222160043\\
0.0169646003293849	0.029704381052865\\
0.0464955712731289	0.0297116189636064\\
0.129433617327899	0.0297680738695372\\
0.359398199570672	0.030209084896221\\
1.00028310090299	0.0340351879760734\\
};
\addlegendentry{true value}

\addplot [only marks,color=mycolor1, draw=none, mark=*, mark options={solid, mycolor1},mark size=1.5pt]
  table[row sep=crcr]{%
0.000628318530717959	0.0297032713214858\\
0.00188495559215388	0.0297032836336428\\
0.00628318530717958	0.0297034236850327\\
0.0169646003293849	0.0297043917626637\\
0.0464955712731289	0.0297116994450372\\
0.129433617327899	0.0297686995569023\\
0.359398199570672	0.0302140302616045\\
1.00028310090299	0.0340821078153398\\
};
\addlegendentry{lsTFE}

\end{axis}
\end{tikzpicture}%
		\subcaption{interpolation data}
		\label{fig:lsETFE-interpolationData}
	\end{subfigure}\hfill
	\begin{subfigure}[c]{0.48\textwidth}
		\centering
		% This file was created by matlab2tikz.
%
\definecolor{mycolor1}{rgb}{0.00000,0.44700,0.74100}%
\definecolor{mycolor2}{rgb}{0.85000,0.32500,0.09800}%
\begin{tikzpicture}

\begin{axis}[%
width=2.2in,
height=1.7in,
at={(0.758in,0.481in)},
scale only axis,
xmode=log,
xmin=1,
xmax=20,
xminorticks=true,
xlabel style={font=\color{white!15!black}},
xlabel={freq (rad/sec)},
ymode=log,
ymin=0.01,
ymax=1,
yminorticks=true,
axis background/.style={fill=white},
legend style={legend cell align=left, align=left, draw=white!15!black}
]

\addplot [only marks,color=mycolor2, draw=none, mark=square, mark options={solid, mycolor2}]
  table[row sep=crcr]{%
2.04203522483337	0.0588110012338469\\
2.82743338823081	0.231128623467937\\
3.76991118430775	0.0859603000185347\\
5.18362787842316	0.033704823124924\\
7.22566310325653	0.0345183235467544\\
10.0530964914873	0.0730740859910882\\
};
\addlegendentry{true value}

\addplot [only marks,color=mycolor1, draw=none, mark=*, mark options={solid, mycolor1},mark size=1.5pt]
  table[row sep=crcr]{%
2.04203522483337	0.0565237336951213\\
2.82743338823081	0.244544453960406\\
3.76991118430775	0.08104315062105\\
5.18362787842316	0.0340606789161688\\
7.22566310325653	0.0345068320897127\\
10.0530964914873	0.0725969453104999\\
};
\addlegendentry{lsTFE}

\end{axis}
\end{tikzpicture}%
		\subcaption{test data}
		\label{fig:lsETFE-testData}
	\end{subfigure}
	\caption{Estimation of the transfer function via lsTFE. The estimates are plotted with blue dots and the true values of the transfer function with red squares.}
	\label{fig:lsETFE}
\end{figure}
\begin{table}
	\centering
	\caption{Interpolation data: estimates of the transfer function via \gls{lsETFE}}
	\label{tab:lsETFEinterpolationData}
	\footnotesize
	\begin{tabular}{llll}
		\toprule
		\textbf{frequency} $\omega$ & \textbf{true value} $\transfer(\imath\omega)$ & \textbf{\gls{lsETFE} estimate} $\transferEstimate_{k_1;\numTimeSteps}$ & \textbf{error}\\\midrule
		\num{6.28e-04} & \num{2.97e-02} + $\imath$\num{9.05e-06} & \num{2.97e-02} + $\imath$\num{9.00e-06} & \num{4.71e-08}\\
\num{1.88e-03} & \num{2.97e-02} + $\imath$\num{2.71e-05} & \num{2.97e-02} + $\imath$\num{2.70e-05} & \num{1.41e-07}\\
\num{6.28e-03} & \num{2.97e-02} + $\imath$\num{9.05e-05} & \num{2.97e-02} + $\imath$\num{9.00e-05} & \num{4.71e-07}\\
\num{1.70e-02} & \num{2.97e-02} + $\imath$\num{2.44e-04} & \num{2.97e-02} + $\imath$\num{2.43e-04} & \num{1.27e-06}\\
\num{4.65e-02} & \num{2.97e-02} + $\imath$\num{6.70e-04} & \num{2.97e-02} + $\imath$\num{6.66e-04} & \num{3.49e-06}\\
\num{1.29e-01} & \num{2.97e-02} + $\imath$\num{1.87e-03} & \num{2.97e-02} + $\imath$\num{1.86e-03} & \num{9.75e-06}\\
\num{3.59e-01} & \num{2.98e-02} + $\imath$\num{5.24e-03} & \num{2.98e-02} + $\imath$\num{5.21e-03} & \num{2.79e-05}\\
\num{1.00e+00} & \num{3.01e-02} + $\imath$\num{1.58e-02} & \num{3.02e-02} + $\imath$\num{1.58e-02} & \num{9.86e-05}\\\bottomrule
	\end{tabular}
\end{table}
The approximation of the transfer function at the lower frequencies (cf.\ \Cref{fig:lsETFE-interpolationData}) is almost matching the true values. Indeed, the maximum error between the estimates and the true values of the transfer function in the interpolation data set is \num{9.86e-5}. The approximation for the higher frequencies (cf.\ \Cref{fig:lsETFE-testData} and \Cref{tab:lsETFEtestData}) is -- as expected -- significantly worse. However, even in this frequency range, the approximation is reasonable with a maximum error of \num{1.47e-2}.
\begin{table}
	\centering
	\caption{Test data: estimates of the transfer function via \gls{lsETFE}}
	\label{tab:lsETFEtestData}
	\footnotesize
	\begin{tabular}{llll}
		\toprule
		\textbf{frequency} $\omega$ & \textbf{true value} $\transfer(\imath\omega)$ & \textbf{\gls{lsETFE} estimate} $\transferEstimate_{k_1;\numTimeSteps}$ & \textbf{error}\\\midrule
		\num{2.04e+00} & \num{3.26e-02} + $\imath$\num{4.89e-02} & \num{3.11e-02} + $\imath$\num{4.72e-02} & \num{2.32e-03}\\
\num{2.83e+00} & \num{6.16e-02} + $\imath$\num{2.23e-01} & \num{5.93e-02} + $\imath$\num{2.37e-01} & \num{1.47e-02}\\
\num{3.77e+00} & \num{2.60e-02} - $\imath$\num{8.19e-02} & \num{2.06e-02} - $\imath$\num{7.84e-02} & \num{6.48e-03}\\
\num{5.18e+00} & \num{2.79e-02} - $\imath$\num{1.89e-02} & \num{2.89e-02} - $\imath$\num{1.80e-02} & \num{1.35e-03}\\
\num{7.23e+00} & \num{3.19e-02} + $\imath$\num{1.31e-02} & \num{3.23e-02} + $\imath$\num{1.21e-02} & \num{1.07e-03}\\
\num{1.01e+01} & \num{1.88e-02} - $\imath$\num{7.06e-02} & \num{1.84e-02} - $\imath$\num{7.02e-02} & \num{5.80e-04}\\\bottomrule
	\end{tabular}
\end{table}

Before we can apply \Cref{alg:structuredRealization}, we need to specify the structure via defining the function family $\lbrace \hfunc_1,\ldots,\hfunc_\numFunctions\rbrace$. To this end, we first use the actual structure of the original model, i.e., the ${\hfunc_k}$'s are given by 
\begin{displaymath}
	\hfunc_1(s) = s, \qquad \hfunc_2(s) \equiv -1,\qquad \text{and}\qquad \hfunc_3(s) = -\mathrm{e}^{-s}.
\end{displaymath}
Note that the choice for $\hfunc_3$ includes the true value for the delay time $\delay = 1$.
To obtain a real realization, we add the complex conjugate data to the estimated transfer function values given in \Cref{tab:lsETFEinterpolationData} and choose $Q_{\LeftData}=2$ and $Q_{\RightData} = 1$. The transfer function of the obtained realization is depicted as the red dashed line in \Cref{fig:lsETFE-FOMvsROM}. Although we use only approximations of the transfer function, the realization approximates the original model (blue solid line in \Cref{fig:lsETFE-FOMvsROM}) well, even for frequencies larger than the frequencies used to construct the realization.

\begin{figure}
	\centering
	\input{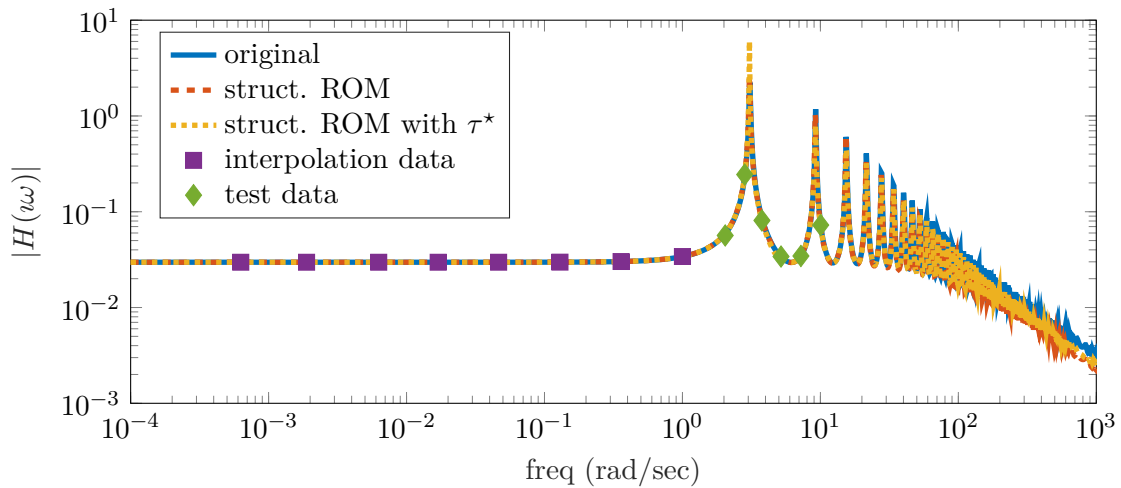}
	\caption{The transfer function of the true model (solid blue line), the realization (dashed red line), and the realization with estimated parameter $\delay^\star$ (dotted yellow line) obtained from the estimated interpolation data (orange squares) and test data (green diamonds).}
	\label{fig:lsETFE-FOMvsROM}
\end{figure}

In a real application, we usually cannot compare the transfer function of the constructed realization with the true transfer function, since we do not know the true model and hence also do not know the transfer function. Instead, it is more reasonable to compare the realization with the true model via simulations in the time domain (see also \Cref{problem:generalSetting}). As validation input functions we use
\begin{align*}
	\inpVar_1(t) &= \sin(t), & \inpVar_2(t) &= 2\left(t-\tfrac{1}{2}\floor{2t+\tfrac{1}{2}}\right)\cdot(-1)^{\floor{2t+\frac{1}{2}}} + 1, & \inpVar_3(t) &= t\exp(-t^2). 
\end{align*}
The results are presented in \Cref{fig:timeSeries-ROMvsFOMsimulation} and \Cref{tab:errorMeasurementsForValidationInputs}.
The relative errors given in \Cref{tab:errorMeasurementsForValidationInputs} indicate slight differences between the accuracies obtained for the three different input signals. 
Nevertheless, the output trajectories of the realization agree very well with the ones of the original model for all three inputs, as illustrated in \Cref{fig:timeSeries-ROMvsFOMsimulation}.

\begin{table}
	\centering
	\caption{Error measurements for the validation inputs}
	\label{tab:errorMeasurementsForValidationInputs}
	\begin{tabular}{llll}
		\toprule
		\textbf{input signal} & $\LtwoNorm{\inpVar}$ & $\frac{\LinftyNorm{\outVar-\outVarRed}}{\LtwoNorm{\inpVar}}$ & $\frac{\LtwoNorm{\outVar-\outVarRed}}{\LtwoNorm{\inpVar}}$\\
		\midrule
		$\inpVar_1$ & \num{2.18e+00} & \num{6.96e-04} & \num{1.26e-03}\\
		$\inpVar_2$ & \num{3.29e+00} & \num{3.73e-03} & \num{3.51e-03}\\
		$\inpVar_3$ & \num{3.96e-01} & \num{7.69e-03} & \num{1.01e-02}\\\bottomrule
	\end{tabular}
\end{table}

\begin{figure}[t]
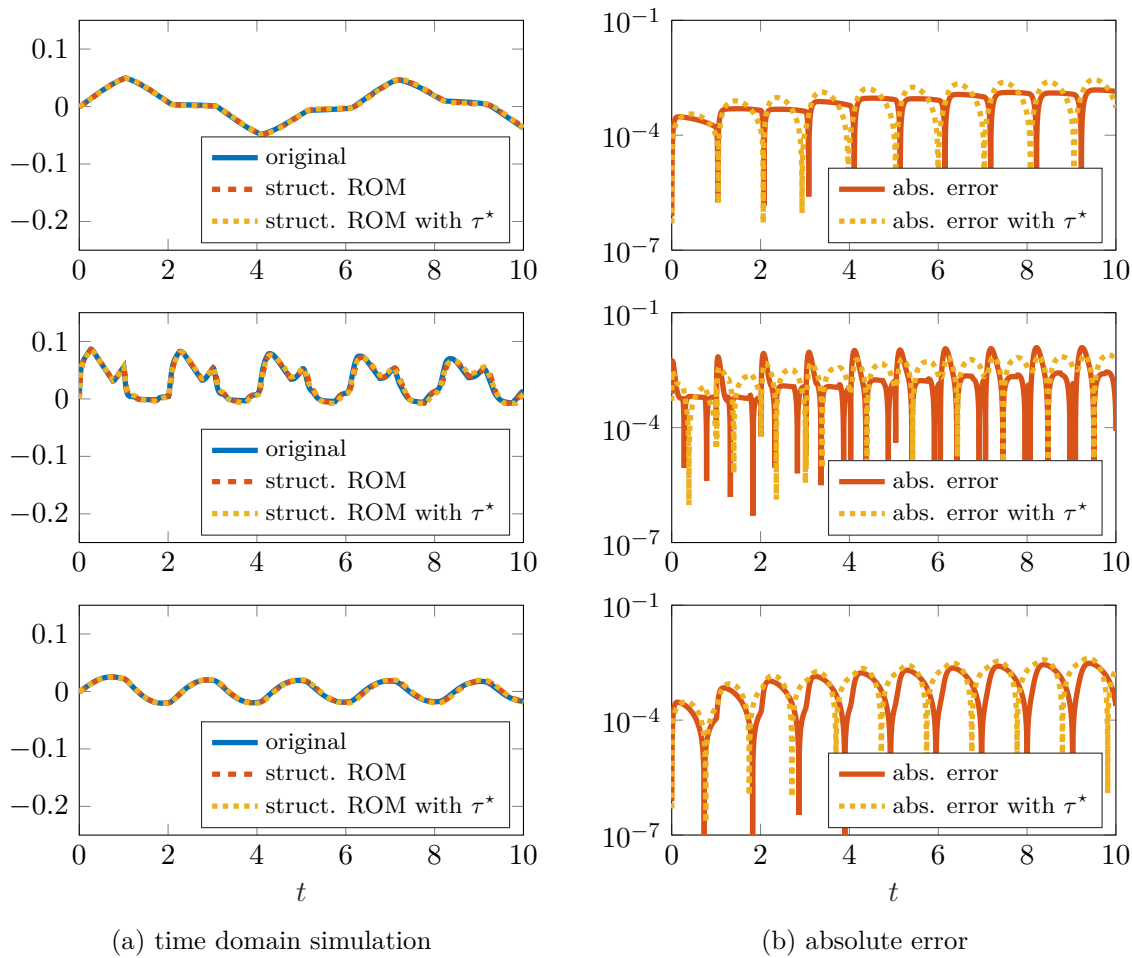

	\begin{subfigure}[b]{.48\linewidth}
		\centering
		% This file was created by matlab2tikz.
%
\definecolor{mycolor1}{rgb}{0.00000,0.44700,0.74100}%
\definecolor{mycolor2}{rgb}{0.85000,0.32500,0.09800}%
\definecolor{mycolor3}{rgb}{0.92900,0.69400,0.12500}%
\begin{tikzpicture}

\begin{axis}[%
width=2.3in,
height=1.2in,
at={(0.758in,2.554in)},
scale only axis,
xmin=0,
xmax=10,
ymin=-0.25,
ymax=0.15,
axis background/.style={fill=white},
legend style={at={(0.97,0.03)},font=\footnotesize,anchor=south east,legend cell align=left,align=left,draw=white!15!black}
]

\addplot [color=mycolor1,line width=2pt]
  table[row sep=crcr]{%
0	0\\
0.0289999999999999	0.000821771222900836\\
0.106	0.00504643116301295\\
0.489000000000001	0.0265669916109808\\
0.712999999999999	0.0375566067741193\\
0.907	0.0455755149045416\\
1.032	0.0494693981272167\\
1.08	0.0489031010687686\\
1.186	0.0457446886690036\\
1.371	0.038739587237707\\
1.578	0.029344050936265\\
1.824	0.0165855897950546\\
2.071	0.00389805799074239\\
2.142	0.00327193378472757\\
2.761	0.00225984384759848\\
3.073	0.000774481536994998\\
3.133	-0.0017188799184904\\
3.277	-0.00981357340521782\\
3.593	-0.0271850358029475\\
3.815	-0.0378737329899721\\
4.009	-0.0457081957429892\\
4.087	-0.0477863883156218\\
4.145	-0.0476194117962745\\
4.235	-0.0456036609749049\\
4.404	-0.0401515758415343\\
4.605	-0.0321543696239441\\
4.836	-0.0213868551413796\\
5.136	-0.00727537113225019\\
5.215	-0.00639591848499066\\
5.537	-0.00561865092745428\\
6.066	-0.00331587090956909\\
6.142	-0.00185602641182037\\
6.214	0.00120524493035745\\
6.369	0.0098032247166735\\
6.688	0.0271523337354331\\
6.91	0.0377078735608354\\
7.089	0.044771797677754\\
7.161	0.0462539902827483\\
7.23	0.0460313249168802\\
7.333	0.0439700690521665\\
7.507	0.0388498959229384\\
7.711	0.031327265989292\\
7.944	0.0211662829145389\\
8.184	0.0107651835088589\\
8.264	0.00951140142613838\\
8.433	0.00887788992945815\\
8.877	0.00695817070436178\\
9.161	0.00448440315317988\\
9.237	0.00216200621383322\\
9.332	-0.00244210726598659\\
9.95	-0.0349526441218089\\
10	-0.0371685458193518\\
};
\addlegendentry{\original}

\addplot [color=mycolor2,dashed,line width=2pt]
  table[row sep=crcr]{%
0	0\\
0.032	0.000818990908353356\\
0.103	0.00458056945656438\\
0.534000000000001	0.0286411673392344\\
0.750999999999999	0.0390297201903209\\
0.942	0.0466718225982348\\
1.042	0.0495111957175567\\
1.094	0.0489055753468151\\
1.196	0.0458777977802676\\
1.379	0.0388941635751259\\
1.586	0.0294438373865074\\
2.082	0.00367509689054657\\
2.151	0.00286578608057297\\
3.066	0.000795687632933806\\
3.124	-0.000908695992718478\\
3.208	-0.00514407936858596\\
3.707	-0.0321769497929534\\
3.914	-0.0414339556250631\\
4.078	-0.0473524381088453\\
4.137	-0.0480037021658699\\
4.206	-0.047110008487822\\
4.323	-0.0438378098788892\\
4.51	-0.0370421161475747\\
4.723	-0.027750788862944\\
5.142	-0.00718533396910992\\
5.213	-0.00581487002512837\\
5.342	-0.00519939707355199\\
5.993	-0.00294107693549073\\
6.156	-0.00140546214703008\\
6.227	0.00113042756785831\\
6.336	0.00680179210235998\\
6.75	0.0291476383932157\\
6.966	0.0390684437699775\\
7.134	0.0453778465691421\\
7.204	0.0465787724126532\\
7.276	0.0462132696795106\\
7.379	0.0440085169378293\\
7.55	0.0386956127440659\\
7.757	0.0307383108911967\\
8.223	0.00984814217772723\\
8.305	0.00835406113259651\\
9.216	0.00287969892893969\\
9.295	0.000346926776547463\\
9.397	-0.00462027686181266\\
10	-0.0357808707504983\\
};
\addlegendentry{\structRealization}

\addplot [color=mycolor3,dotted,line width=2pt]
  table[row sep=crcr]{%
0	0\\
0.0310000000000006	0.000832721463400077\\
0.106999999999999	0.00489738723837263\\
0.539	0.0288109032213271\\
0.76	0.0393873946439633\\
0.952	0.0470720220700311\\
1.039	0.0494481715929922\\
1.093	0.0487856071252502\\
1.208	0.0455462686934212\\
1.374	0.0393890928579488\\
1.562	0.0308513003455069\\
1.788	0.0189925122762933\\
2.079	0.00366177130441336\\
2.159	0.00271572003195963\\
2.386	0.00200254202212946\\
3.096	0.000382800775160064\\
3.173	-0.00301495827056364\\
3.362	-0.0133104620594189\\
3.743	-0.0339783895801293\\
3.941	-0.0430605303962963\\
4.075	-0.0478772592634158\\
4.135	-0.0483307547584531\\
4.216	-0.0472492159457758\\
4.34	-0.0439637890437083\\
4.494	-0.0383300869863969\\
4.68	-0.0299396241035659\\
4.922	-0.0174043053728408\\
5.124	-0.00719267087388964\\
5.201	-0.00553827595436118\\
5.336	-0.00442418312579562\\
5.6	-0.00394328026872159\\
6.112	-0.00309668014037356\\
6.184	-0.00116561096117707\\
6.282	0.00317846283862977\\
6.465	0.0130521463933029\\
6.883	0.0358709974451479\\
7.069	0.0442608169620122\\
7.151	0.0467394153462841\\
7.222	0.0472885037561745\\
7.313	0.0463608437321668\\
7.434	0.043530538062786\\
7.58	0.0385482976598208\\
7.76	0.0308110354195268\\
8.002	0.0187766373692426\\
8.18	0.0102542947718121\\
8.265	0.00813221238779782\\
8.389	0.00673074424126163\\
8.597	0.00605949596801914\\
9.177	0.00484349710385779\\
9.259	0.00265274699581042\\
9.363	-0.00177912618562637\\
9.526	-0.0104097961619054\\
10	-0.0366443083343544\\
};
\addlegendentry{\structRealizationParam}

\end{axis}
\end{tikzpicture}%
	\end{subfigure}\hfill
	\begin{subfigure}[b]{.48\linewidth}
		\centering
		\input{ExampleDelay1-FOMvsROM-verification-simulation-absError-sin.tex}
	\end{subfigure}\\
	\begin{subfigure}[b]{.48\linewidth}
		\centering
		\input{ExampleDelay1-FOMvsROM-verification-simulation-triangle.tex}
	\end{subfigure}\hfill
	\begin{subfigure}[b]{.48\linewidth}
		\centering
		\input{ExampleDelay1-FOMvsROM-verification-simulation-absError-triangle.tex}
	\end{subfigure}\\
	\begin{subfigure}[b]{.48\linewidth}
		\centering
		\input{ExampleDelay1-FOMvsROM-verification-simulation-texp.tex}
		\subcaption{time domain simulation}
	\end{subfigure}\hfill
	\begin{subfigure}[b]{.48\linewidth}
		\centering
		\input{ExampleDelay1-FOMvsROM-verification-simulation-absError-texp.tex}
		\subcaption{absolute error}
	\end{subfigure}
	\caption{Comparison between the output of the original model and the outputs of the approximations. Top: $\inpVar_1$; middle: $\inpVar_2$; bottom: $\inpVar_3$.}
	\label{fig:timeSeries-ROMvsFOMsimulation}
\end{figure}

As already noted, all previous results have been obtained by exploiting the knowledge of the actual time delay which is equal to one in this case. However, in practical applications we cannot expect to have precise a priori knowledge of the time delay, but rather a rough estimate of it. Thus, in order to build the realization from data only, we modify the function $\hfunc_3$ as 
\begin{displaymath}
	\hfunc_3(s,\delay) = -\mathrm{e}^{-\delay s}
\end{displaymath} 
with free parameter $\delay$.
As discussed in \Cref{sec:parameterEstimation}, we can then use the test data to find an optimal delay time $\delay^\star$. A sampling of the least-squares error \eqref{eq:lsErrorFunctional} is provided in \Cref{fig:samplingLeastSquaresError} and reveals a distinct minimum at around the actual time delay $\tau=1$.

\begin{figure}
	\centering
	\input{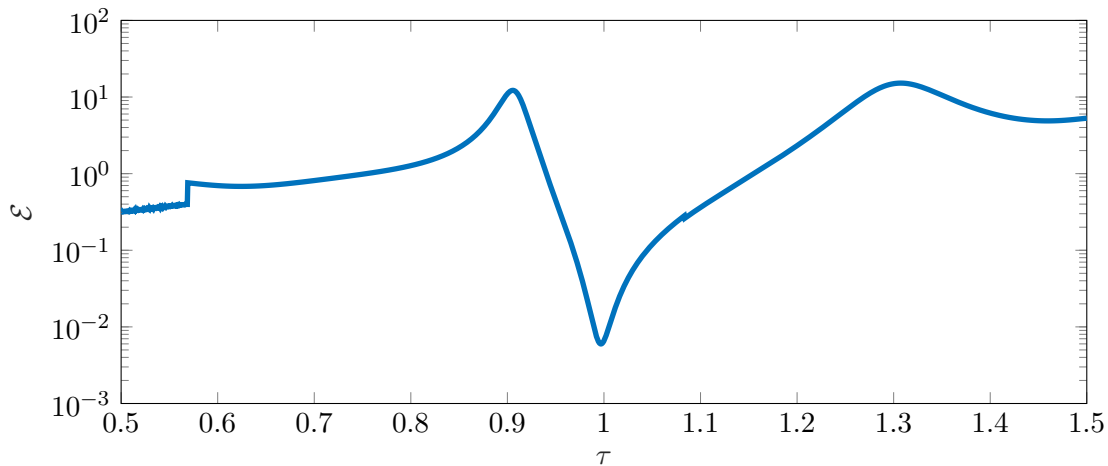}
	\caption{Sampling of the least-squares error \eqref{eq:lsErrorFunctional} over the delay time $\delay$.}
	\label{fig:samplingLeastSquaresError}
\end{figure}

The actual minimization of the cost function \eqref{eq:lsErrorFunctional} was performed using the MATLAB function \texttt{fmincon}.
As start value we used $\delay = \num{0.98}$, which was obtained from a rough sampling of the cost function.
The minimizer determined via \texttt{fmincon} is $\delay^\star = 0.996883$ and for this time delay the cost function attains a value of $\lsError(\delay^\star) = \num{5.99e-3}$. To simplify the numerical simulations, we use the rounded value $\delay^\star = 0.997$ for the following results. The transfer function of the realization constructed with the estimated parameter $\delay^\star$ and all transfer function estimates (i.e., the interpolation data and the test data), are depicted in \Cref{fig:lsETFE-FOMvsROM}. It is slightly different from the realization with the true parameter, but still approximates the original model well. This statement can be verified by the simulation results with the test inputs $\inpVar_1$, $\inpVar_2$, and $\inpVar_3$, which are presented in \Cref{fig:timeSeries-ROMvsFOMsimulation} and \Cref{tab:errorMeasurementsForValidationInputsParamOptim}.

\begin{table}
	\centering
	\caption{Error measurements for the validation inputs based on the estimated delay}
	\label{tab:errorMeasurementsForValidationInputsParamOptim}
	\begin{tabular}{llll}
		\toprule
		\textbf{input signal} & $\LtwoNorm{\inpVar}$ & $\frac{\LinftyNorm{\outVar-\outVarRed}}{\LtwoNorm{\inpVar}}$ & $\frac{\LtwoNorm{\outVar-\outVarRed}}{\LtwoNorm{\inpVar}}$\\
		\midrule
		$\inpVar_1$ & \num{2.18e+00} & \num{1.31e-03} & \num{1.77e-03}\\
		$\inpVar_2$ & \num{3.29e+00} & \num{2.43e-03} & \num{3.79e-03}\\
		$\inpVar_3$ & \num{3.96e-01} & \num{1.43e-02} & \num{1.04e-02}\\\bottomrule
	\end{tabular}
\end{table}

It is worth to note that there is no guarantee that the realization obtained from \Cref{alg:structuredRealization} is stable.
In order to investigate the stability of the obtained realizations, we consider the eigenvalues depicted in \Cref{fig:lsETFE-eigenvalues}, which are computed using the algorithm from \cite{WuM12}.
Indeed, the realization obtained from all transfer function estimates and the estimated delay $\delay^\star$ is unstable with one eigenvalue in the right half plane (cf.\  \Cref{fig:lsETFE-eigenvalues-ROMparam}).
This is not very surprising, since the eigenvalues of the original model (cf.\ \Cref{fig:lsETFE-eigenvalues-FOM}) are close to the imaginary axis, such that one can expect that a small perturbation results in an unstable model. Still, the realization constructed only from the interpolation data and with the true value for the delay $\delay$ is stable (see \Cref{fig:lsETFE-eigenvalues-ROM} for the eigenvalues with the largest real part). In contrast to stabilizing post-processing algorithms for rational realizations as offered in \cite{GosA16}, a stable--unstable decomposition of a \gls{DDAE} is not possible in general and thus stability must be enforced during the construction of the realization. This is currently under investigation and subject to further research.

\begin{figure}
	\begin{subfigure}[c]{1\linewidth}
		\centering
		\input{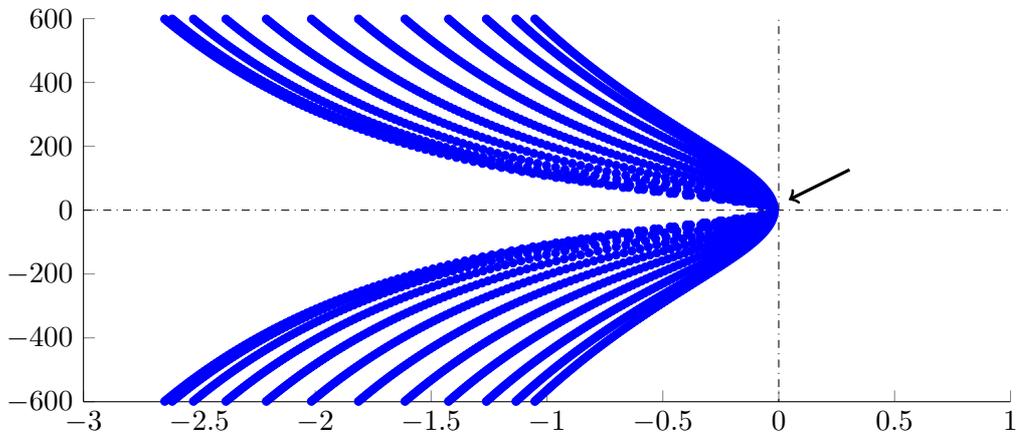}
		\subcaption{original model}
		\label{fig:lsETFE-eigenvalues-FOM}
	\end{subfigure}\\
	\begin{subfigure}[c]{\linewidth}
		\centering
		\input{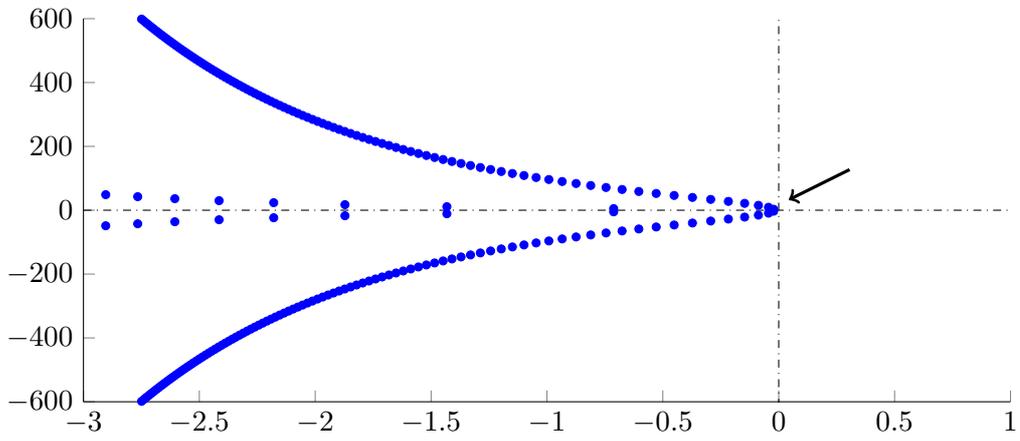}
		\subcaption{structured realization with true delay}
		\label{fig:lsETFE-eigenvalues-ROM}
	\end{subfigure}\\
	\begin{subfigure}[c]{\linewidth}
		\centering
		\input{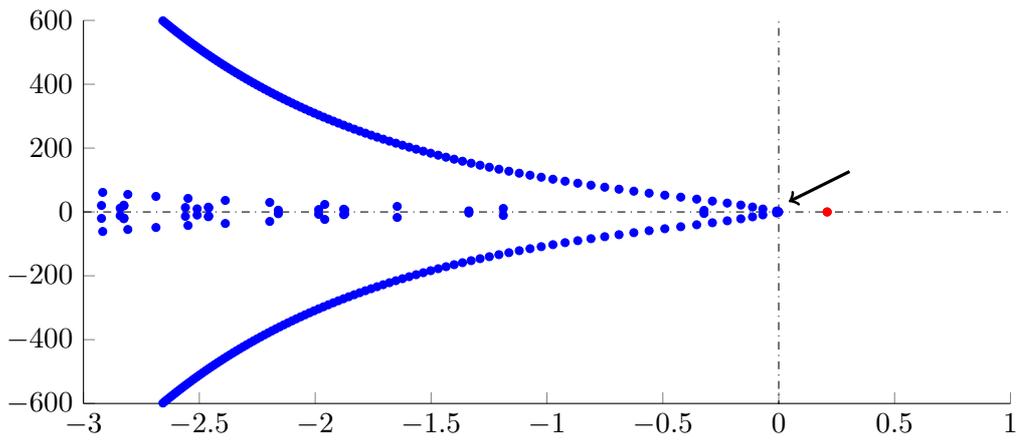}
		\subcaption{structured realization with estimated delay $\delay^\star$}
		\label{fig:lsETFE-eigenvalues-ROMparam}
	\end{subfigure}
	\caption{Eigenvalues of the realization with largest real part}
	\label{fig:lsETFE-eigenvalues}
\end{figure}

We conclude this case study with a remark about the choice of the interpolation frequencies.

\begin{remark}
	In our numerical simulations, we observed that including estimates of the transfer function at smaller frequencies tends to produce less unstable realizations in the sense that fewer eigenvalues are unstable and the real part of the unstable eigenvalues is smaller compared to a realization obtained from estimates of the transfer function at higher frequencies. As an example we refer to the realization obtained only from the interpolation data (with the true delay $\delay=1$) and the realization obtained from all transfer function estimates (with the estimated delay $\delay = \delay^*$), see \Cref{fig:lsETFE-eigenvalues} for the corresponding eigenvalue plots.
\end{remark}

\section{Summary}
We presented a framework for constructing structured realizations purely based on input and output trajectories in the time-domain.
The approach allows for a wide range of system structures such as second-order, time-delay, and viscoelastic systems.
The procedure can essentially be subdivided into two steps: First, we estimate transfer function data in the frequency domain based on given discrete-time data of the input and the output signal.
To this end, we apply the approach presented in \cite{PehGW17} to our setting and obtain transfer function estimates via solving a least-squares problem, cf.\ \Cref{alg:LSTFE}.
The second step is the construction of a structured realization based on the given transfer function estimates, see \Cref{alg:structuredRealization}.
For this purpose, we use the framework introduced in \cite{SchUBG18} to obtain a structured realization which interpolates the transfer function data generated in the first step.

In practical applications it may be advantageous to not completely fix the system structure a priori, but instead to allow for some flexibility via introducing free parameters.
Considering delay systems for example, the dynamics strongly depend on the value of the time delay, which can generally not be assumed to be known a priori.
For this purpose, we presented a simple procedure to choose these free parameters in an optimal way.
This is achieved by generating additional transfer function data and fit the parameters to these data via minimizing the data mismatch in a least squares sense.

We applied the complete framework to a delay example and observed that the obtained realization agrees well with the original model in terms of the transfer function and in terms of time-domain trajectories of the output using different input signals for validation.
This good agreement cannot only be observed when the true time delay is used, but also when we use an estimated time delay based on additional test data.

The presented results motivate for further research in this direction.
For instance, so far there is no guarantee that the realization is stable. 
Thus, it would be interesting to investigate how stability of the realization can be enforced during its construction.
This is still an open problem even for standard state-space realizations as considered in the Loewner framework.

\subsection*{Acknowledgments}
We thank Prof{.} Benjamin Peherstorfer for providing a MATLAB code for the numerical results obtained in \cite{PehGW17}.

\bibliographystyle{plain}
\bibliography{references}

\vfill
{\printglossary}

\end{document}